\title{Endomorphism operads of functors}
\author{Gabriel C. Drummond-Cole}
\thanks{The first and third authors were supported by IBS-R003-D1.}
\address{Center for Geometry and Physics, Institute for Basic Science
	(IBS), Pohang, Republic of Korea 37673}
\email{gabriel.c.drummond.cole@gmail.com}
\author{Joseph Hirsh}
\thanks{The second author was supported by NSF DMS-1304169}
\email{josephhirsh@gmail.com}
\author{Damien Lejay}
\address{Center for Geometry and Physics, Institute for Basic Science
	(IBS), Pohang, Republic of Korea 37673}
\email{lejay@paracompact.space}
\date{}
\theoremstyle{plain}
\theoremstyle{definition}
\newtheorem{notation}[theorem]{Notation}
\newtheorem*{question*}{Question}
\newcommand{\ground}{\mathscr{C}}
\newcommand{\domain}{\mathcal{D}}
\newcommand{\Perm}{\mathbf{Perm}}
\newcommand{\Sets}{\mathsf{Set}}
\newcommand{\Grp}{\mathsf{Grp}}
\newcommand{\Top}{\mathsf{Top}}
\newcommand{\slice}[2]{#1_{/#2}}
\newcommand{\vect}[1]{\mathsf{Vect}_{#1}}
\DeclareFontFamily{U}{min}{}
\DeclareFontShape{U}{min}{m}{n}{<-> udmj30}{}
\newcommand{\sectionref}[1]{\hyperref[#1]{\S\thinspace\ref{#1}}}
\newcommand{\morita}[1]{\mathsf{Mor}_*}
\newcommand{\gl}[1]{\operatorname{GL}_{#1}}
\newcommand{\com}{\mathsf{C}}
\newcommand{\fieldk}{\boldsymbol{K}}
\newcommand{\ffield}[1]{\mathbf{F}_{#1}}
\newcommand{\coint}[1]{\int_{#1}^{\ast}}
\newcommand{\forget}[1]{\mathrm{U}_{#1}}
\newcommand{\symgroup}[1]{\mathbf{S}_{#1}}
\newcommand{\triangletimes}{\mathbin{\triangleleft}}
\newcommand{\isonat}{=}
\newcommand{\under}[2]{#1_{#2}}
\newcommand{\power}[2]{#1^{#2}}
\newcommand{\sym}[1]{\mathrm{S}(#1)}
\newcommand{\auto}[1]{\mathscr{E}(#1)}
\newcommand{\automonoid}[1]{\mathscr{E}_1(#1)}
\newcommand{\autooperad}[1]{\auto{#1}}
\newcommand{\autofunctor}{\mathscr{E}}
\newcommand{\automonoidfunctor}{\autofunctor_1}
\newcommand{\autooperadfunctor}{\autofunctor}
\newcommand{\alg}[1]{#1\text{-}\mathsf{alg}}
\newcommand{\rep}[1]{#1\text{-}\mathsf{rep}}
\newcommand{\repfunctor}{\mathsf{Rep}}
\newcommand{\algfunctor}{\mathsf{Alg}}
\newcommand{\accesscats}{\mathsf{Acc}}
\newcommand{\monu}{\mathbf{1}}
\newcommand{\monoids}[1]{\mathsf{Mon}\mleft(#1\mright)}
\newcommand{\operads}[1]{\mathsf{Op}\mleft(#1\mright)}
\newcommand{\abelian}{\mathsf{Ab}}
\newcommand{\chains}{\mathsf{Ch}}
\renewcommand{\op}{^\mathrm{op}}
\renewcommand{\hom}[3]{\mathrm{Hom}_{#1}(#2, #3)}
\newcommand{\nat}[3]{\mathrm{Nat}_{#1}(#2, #3)}
\newcommand{\catofmod}[1]{#1\text{-}\mathsf{mod}}
\newcommand{\catofalg}[1]{#1\text{-}\mathsf{alg}}
\newcommand{\freegroup}[1]{\operatorname{F}_{#1}}
\newcommand{\einfinity}{\mathsf{E}_\infty}
\newcommand{\simplyconnectedLiegroups}{\mathsf{LieGrp}^\circ}
\newcommand{\Lie}{\mathsf{Lie}}
\newcommand{\tangentate}{\mathrm{T}_\mathrm{e}}
\DeclareMathOperator{\Hom}{Hom}
\begin{document}

\begin{abstract}
We consider the endomorphism operad of a functor, which is roughly the
object of natural transformations from (monoidal) powers of that functor
to itself.

There are many examples from geometry, topology, and algebra where this
object has already been implicitly studied. 

We ask whether the endomorphism operad of the forgetful functor from
algebras over an operad to the ground category recovers that operad.
The answer is positive for operads in vector spaces over an infinite
field, but negative both in vector spaces over finite fields and in
sets.

Several examples are computed.
\end{abstract}
\maketitle
\tableofcontents

\section{Introduction}

It is commonplace in contemporary mathematics to discuss the natural 
endomorphisms of a functor. Cohomology operations, the center of
a category, and reconstruction theorems in the style of Tannaka
are examples of this line of study. 

The raison d'être of this article is to discuss the natural 
endomorphisms in a more general sense. Specifically we study the 
natural endomorphism \emph{operad} of a functor. As we argue below,
this generalization is not at all exotic---mathematicians were 
studying endomorphism operads of functors decades before operads or 
functors were explicitly defined. 
And there are multiple natural questions of contemporary interest
that are best described as the study of the endomorphism operad of
a particular functor.

However, this point of view seems to have remained mostly implicit
in the literature (one notable exception is
\emph{Modules over Operads and Functors}%
~\cite[\S\thinspace 3.4]{doi:10.1007/978-3-540-89056-0}, where the 
subject is treated cleanly but in passing).

Our contribution here is 
\begin{enumerate}
	\item to give some outlines of the structure of the operadic theory, 
	      exploiting the fact that the endomorphism operad of a functor is
	      adjoint to
	      \[
	      	P\longmapsto \alg P,
	      \] 
	\item to discuss operadic approximations to functors, and in
	      particular successes and failures of \emph{reconstruction} in
	      the operadic context, and
	\item to calculate some interesting and illustrative examples.
\end{enumerate}

Let us discuss the structure of the article.

After the introduction, in which we define our terms, we describe some
historical examples of interest as motivation%
~[\sectionref{section:
What endomorphism operads of functors are about}].
Then we discuss the interpretation of the endomorphism operad of a
functor as a universal operadic approximation to that functor%
~[\sectionref{section: approximation}].
After that, we turn to the question of reconstruction%
~[\sectionref{section: reconstruction}]. This is the question of whether
and when the endomorphism construction, applied to the forgetful functor
from algebras over an operad to the ground category, recovers that
operad.

We defer all proofs to the following three sections, where we introduce
some technical tools and apply them to justify the statements and 
examples of the previous sections.

For our conventions, we fix a presentable closed symmetric monoidal
category $(\ground, \otimes, \monu)$ and denote by
\[
	\begin{tikzcd}
		\ground\op \times \ground \ar[rr, "{[-,-]}"]&& \ground
	\end{tikzcd}
\]
its associated self-enrichment.
We let $\monoids{\ground}$ denote the category
of monoids in $\ground$ and $\operads{\ground}$ denote the category
of operads in $\ground$, i.e., monoids in the monoidal category of
symmetric sequences in $\ground$.

\subsection{Representations of monoids and operads}

Given a monoid $M$ in $\monoids{\ground}$ we denote by $\rep M$ the
category of $M$-representations in $\ground$. The category $\rep M$
comes equipped with the forgetful functor $\forget M$ to $\ground$.

Similarly, given an operad $P$ in $\operads{\ground}$, we denote by
$\alg P$ the category of $P$-algebras in $\ground$. The category
$\alg P$ comes equipped with the forgetful functor $\forget P$ to
$\ground$.

In both cases this assignment is functorial. A map $f:M\to N$
of monoids induces a functor $\rep N\to \rep M$ over $\ground$,
and a map $f:P\to Q$ of operads induces a functor
$\alg Q\to \alg P$ over $\ground$.

The conditions we have set on $\ground$ ensure that the categories of
representations and all induced and forgetful functors are accessible
 and so we get functors
\begin{align*}
\repfunctor:\monoids{\ground}
&\longrightarrow \left(\slice{\accesscats} \ground\right)\op
\\
\algfunctor:\operads{\ground}
&\longrightarrow \left(\slice{\accesscats} \ground\right)\op
\end{align*}

\subsection{Endomorphisms of functors to \texorpdfstring{$\ground$}{C}}

Given a functor $F$ with codomain $\ground$, we may take the
endomorphism monoid $\automonoid F$ or the endomorphism operad
$\autooperad F$ of natural transformations from $F$ to itself. The
monoidal case is classical. The operadic case occurs in more recent
literature%
~\cite[3.4]{doi:10.1007/978-3-540-89056-0}.

Recall that given two functors $F, G : \domain \to \ground$,
up to size issues which can be finessed in a variety of ways,
the $\ground$-natural transformations from $F$ to $G$ are presented
by the object
\[
	\nat \ground F G \coloneqq \coint {\domain}
	[F -, G -].
\]
Here, following Yoneda's original notation%
~\cite[\S\thinspace 4]{On_Ext_and_exact_sequences},
$\coint {\domain}$ denotes the cointegration (or end)
of a functor $\domain\op \times \domain \to \ground$.

\begin{definition}%
\label{definition: endo operad and monoid}
Let $F : \domain \to \ground$ be an accessible functor.
The \emph{endomorphism monoid} of $F$ is
\[
	\automonoid F \coloneqq \nat \ground F F
\]
the monoid of $\ground$-natural transformations of $F$.

The \emph{endomorphism operad} of $F : \domain \to
\ground$ is, in arity $n$
\[
	\autooperad F(n) \coloneqq \nat \ground {F^{\otimes n}} F
\]
the operad of $\ground$-natural transformations of $F$.
\end{definition}

In both cases the structure operations are induced by composition in 
the closed category $\ground$. 
In particular, it follows from the definition that the endomorphism 
monoid $\automonoid F$ is the arity one component of the endomorphism 
operad $\autooperad F$.

Accessibility of $F$ and presentability of $\ground$ ensure that
the endomorphism monoid and operad are
small~\cite[3.1]{arXiv:1904.06987}.

For example, if $X:*\to\ground$ is an object of $\ground$, then
$\autooperad X$ is the ordinary endomorphism operad with components
$[X^{\otimes n},X]$.

\begin{remark}[Coendomorphism operad]
Dually, one can define the coendomorphism operad of a functor
$F: \domain \to \ground$ via
\[
	\autooperad F(n) \coloneqq \nat \ground F {F^{\otimes n}}.
\]
This notion has also been actively studied in recent literature%
~\cite{doi:10.24033/ast.904}.
\end{remark}

\begin{remark}
Unfortunately, the name ``endomorphism operad of a functor'' has been
used in the literature for at least three different notions.
\begin{enumerate}
	\item Here the ``endomorphism operad of a functor'' $F :
	      \domain\to\mathcal{C}$ (with $\mathcal{C}$ symmetric monoidal)
	      is the endomorphism operad of the object $F$ in the symmetric
	      monoidal category of functors from $\domain$ to $\mathcal{C}$;
	\item in unpublished work, May used the term ``endomorphism operad 
	of a functor'' $F$ for what we have called the coendomorphism operad 
	of $F$;
	\item Richter and others have defined another kind of ``endomorphism 
		operad of a functor'' $F: \domain\to \mathcal{C}$ where both 
		$\domain$ and $\mathcal{C}$ are monoidal in their study of 
		transfer of algebraic structure%
~\cite{doi:10.1016/j.jpaa.2005.04.017,isbn:978-0-8218-4776-3}. 
\end{enumerate}
\end{remark}

Both the endomorphism monoid and endomorphism operad are adjoint 
to categories of representations.

\begin{proposition}[{\cite[4.1]{arXiv:1904.06987}}]
We have the following adjunctions:
\[
	\begin{tikzcd}[ampersand replacement=\&]
		\monoids{\ground} \arrow[rr, shift left=2,"\repfunctor"]
		\& \&
		\left(\slice{\accesscats} \ground\right)\op
		\arrow[ll, shift left=2, "\automonoidfunctor"]
		\\
		\operads{\ground} \arrow[rr, shift left=2,"\algfunctor"]
		\& \&
		\left(\slice{\accesscats} \ground\right)\op.
		\arrow[ll, shift left=2, "\autooperadfunctor"]
	\end{tikzcd}
\]
\end{proposition}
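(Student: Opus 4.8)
The plan is to unwind both sides of each claimed adjunction into explicit data and match them. Writing the adjunctions as hom-set bijections, for the monoidal case I want a bijection
\[
	\{\,\widetilde F \colon \domain \to \rep M \ \text{with}\ \forget M \circ \widetilde F = F\,\}
	\;\cong\;
	\hom{\monoids{\ground}}{M}{\automonoid F},
\]
natural in the monoid $M$ and in the object $(\domain, F)$ of $\slice{\accesscats}{\ground}$; the left-hand set is $\hom{(\slice{\accesscats}{\ground})\op}{\repfunctor M}{F}$, because a morphism $\repfunctor M \to F$ in the opposite of the slice is exactly a lift of $F$ through the forgetful functor $\forget M$. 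The operadic statement is then handled by the same recipe carried out in every arity.

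First I would invoke the classical object-level fact: for a single object $X$ of $\ground$, an $M$-action $M \otimes X \to X$ is, by adjoining the tensor factor, the same as a monoid map $M \to [X,X]$ into the endomorphism monoid of $X$, and this identification is compatible with associativity and unitality. Applied pointwise, a lift $\widetilde F$ equips each $F(d)$ with an $M$-action, hence determines a monoid map $M \to [F(d), F(d)]$ for every object $d$ of $\domain$; and functoriality of $\widetilde F$ over $\ground$ is exactly the requirement that each structure morphism $F(f)\colon F(d) \to F(d')$ be $M$-equivariant.

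The central step is to assemble this pointwise data into a single morphism. Equivariance of the $F(f)$ says precisely that the family $d \mapsto \bigl(M \to [F(d), F(d)]\bigr)$ is dinatural: for each $f \colon d \to d'$ the two composites $M \to [F(d'),F(d')] \to [F(d),F(d')]$ and $M \to [F(d),F(d)] \to [F(d),F(d')]$, given by pre- and postcomposition with $F(f)$, coincide. By the universal property of the end this is the same as a single morphism $M \to \coint{\domain}[F-,F-] = \automonoid F$. The only point that requires genuine care—the main obstacle—is to check that this morphism respects the monoid structures exactly when the pointwise actions are associative and unital: one must verify that the monoid multiplication on $\automonoid F$ induced by composition in the closed category $\ground$ corresponds, under the end's universal property, to composition of endomorphisms at each $d$. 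Granting this, the monoid axioms for $M \to \automonoid F$ and the representation axioms for $\widetilde F$ translate into one another, and the displayed bijection follows.

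For the operadic adjunction I would run the same argument, replacing the object-level fact by its operadic analogue: a $P$-algebra structure on $X$ is an operad map $P \to \autooperad X$, where $\autooperad X(n) = [X^{\otimes n}, X]$. Pointwise this yields, in each arity $n$, a dinatural family $P(n) \to [F(d)^{\otimes n}, F(d)]$; the arity-wise end assembles it into a map $P(n) \to \coint{\domain}[F(-)^{\otimes n}, F(-)] = \autooperad F(n)$, and the operad axioms match the algebra axioms just as before. Naturality of both bijections is then a direct check: a map $M \to N$ of monoids (resp.\ $P \to Q$ of operads) acts by restriction of scalars on lifts and by precomposition on monoid (resp.\ operad) maps, while a morphism in $\slice{\accesscats}{\ground}$ acts by restriction of natural transformations along the underlying functor, and these manifestly correspond under the bijection.
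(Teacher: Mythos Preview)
Your argument is correct and follows the standard route: unwind a lift $\widetilde F$ of $F$ through $\forget M$ into pointwise $M$-actions, rewrite each action as a monoid map $M \to [F(d),F(d)]$, observe that functoriality of $\widetilde F$ is exactly dinaturality of this family, and then invoke the universal property of the end to obtain a single monoid map $M \to \automonoid F$; the operadic case is the same argument carried out arity by arity. The one point you flag as requiring care---that the end inherits its multiplication from pointwise composition, so that the assembled map is a monoid (resp.\ operad) morphism precisely when the pointwise data satisfy the action axioms---is indeed the only nontrivial verification, and it goes through because limits in $\ground$ commute with the closed structure.

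There is nothing in the paper to compare your proof against: the proposition is not proved here but imported by citation from~\cite[4.1]{arXiv:1904.06987}. Your sketch is essentially the argument one finds there, so no alternative strategy is on offer.
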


In some of the motivating examples below we consider endomorphism
monoids and operads of functors whose domains are not accessible. This
requires a variation of the setup%
~\cite[2.2, 2.4]{arXiv:1904.06987}
but in practice does not affect the theory very much. 

\section{A historical perspective}%
\label{section: What endomorphism operads of functors are about}
We discuss some motivation and historical antecedents to this theory.

\subsection{Lie groups and Lie algebras}%
\label{subsec:Lie groups}

One of the earliest examples of algebraic structures stemming from
endomorphisms of functors comes from Lie groups and Lie algebras.

Consider the functor $\tangentate$ from real Lie groups to real vector
spaces which takes the tangent space at the identity element. Since the
work of Lie and Klein on ``continuous groups'', it has been known that
this functor factorizes through the category of Lie algebras. In other
words, there is a morphism of operads
\[
	\Lie \longrightarrow \autooperad \tangentate.
\]
Unsurprisingly (to the modern reader), this operad 
map is an isomorphism. 

\begin{restatable}{proposition}{opoftangent}
Let
\[
	\tangentate : \Lie\Grp \longrightarrow \vect \reals
\]
be the functor sending a Lie group $G$ to its tangent space at the
unit. Then
\[
	\autooperad \tangentate = \Lie.
\]
\end{restatable}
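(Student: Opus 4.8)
The plan is to exhibit the canonical operad map $\Lie \to \autooperad{\tangentate}$ and to prove it is an isomorphism in each arity by a free-object computation. The map itself comes from the fact that each tangent space $\tangentate(G) = \mathfrak{g}$ carries its Lie bracket and that the differential $\tangentate(\phi)$ of a Lie group homomorphism $\phi$ is a homomorphism of Lie algebras; hence every multilinear Lie word $w \in \Lie(n)$ defines a natural transformation $\Theta_w \colon \tangentate^{\otimes n} \Rightarrow \tangentate$, and this assignment is compatible with the operad structure. So everything reduces to showing that $w \mapsto \Theta_w$ is bijective on $n$-ary operations.

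First I would pass from groups to algebras. By Lie's theorems the functor $\tangentate$ restricts, on the full subcategory $\simplyconnectedLiegroups$ of simply connected Lie groups, to an equivalence onto finite-dimensional Lie algebras, under which $\tangentate$ becomes the forgetful functor $\forget{\Lie}\colon \alg{\Lie} \to \vect{\mathbb{R}}$. Since the universal cover $\widetilde{G} \to G$ induces an isomorphism on tangent spaces and every Lie group receives such a cover, naturality transports a transformation off $\simplyconnectedLiegroups$ uniquely; thus computing $\autooperad{\tangentate}(n)$ amounts to computing natural transformations $\mathfrak{g}^{\otimes n} \to \mathfrak{g}$ of the underlying-vector-space functor on finite-dimensional Lie algebras.

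Next, given such an $\eta$, I would pin it down on free objects. Linearity of each $\eta_{\mathfrak{g}}$ means it is determined by its values on decomposables $v_1 \otimes \cdots \otimes v_n$, and restricting along the subalgebra generated by the $v_i$ reduces matters to $n$-generated targets. For the free Lie algebra $L_n$ on generators $x_1,\dots,x_n$ the element $w \coloneqq \eta_{L_n}(x_1 \otimes \cdots \otimes x_n)$ controls everything: naturality along the unique map $L_n \to \mathfrak{g}$ with $x_i \mapsto v_i$ gives $\eta_{\mathfrak{g}}(v_1 \otimes \cdots \otimes v_n) = w(v_1,\dots,v_n)$. To see that $w$ lands in the multilinear component $\Lie(n) \subset L_n$, I would use the endomorphisms $x_i \mapsto \lambda_i x_i$ of $L_n$: naturality and multilinearity force $\prod_i \lambda_i \cdot w = \sum_{(d)} \prod_i \lambda_i^{d_i}\, w_{(d)}$ in the multidegree decomposition, and since $\mathbb{R}$ is infinite only the multidegree $(1,\dots,1)$ term can survive. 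This identifies $\eta$ with $\Theta_w$, giving surjectivity; injectivity is the observation that $\Theta_w$ already recovers $w$ on the generators of $L_n$.

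The main obstacle is that $L_n$ is infinite-dimensional, hence not literally the tangent space of a finite-dimensional Lie group, while the finite-dimensional nilpotent truncations surject only onto nilpotent targets and cannot reach a perfect algebra such as $\mathfrak{sl}_2$, nor even a non-nilpotent solvable one such as the two-dimensional affine algebra; indeed on such algebras the scaling argument alone leaves a free parameter. I would resolve this in one of two ways. Either invoke the paper's positive reconstruction result over the infinite field $\mathbb{R}$, which gives $\autooperad{\forget{\Lie}} = \Lie$ over all Lie algebras, and argue that restriction to the finite-dimensional ones does not enlarge the transformation object. Or, to stay within finite dimensions, run the scaling argument inside the current algebra $\mathfrak{g} \otimes \mathbb{R}[t]$: the automorphism $t \mapsto \lambda t$ forces $\eta$ on the degree-one elements $v_i \otimes t$ into top degree, the truncations $\mathfrak{g} \otimes \mathbb{R}[t]/(t^{n+1})$ make the subalgebra generated by these degree-one elements nilpotent so the already-known value applies, and evaluation $t \mapsto 1$ transports the conclusion back to $\mathfrak{g}$. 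The delicate point in this second route is precisely that $t \mapsto 1$ is a homomorphism only before truncation, so the finite-dimensional nilpotent input must be combined with the infinite-dimensional $\mathfrak{g} \otimes \mathbb{R}[t]$; this is where the finite-dimensionality of Lie groups genuinely interacts with the statement.
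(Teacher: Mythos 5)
Your first proposed resolution \emph{is} the paper's proof: the paper reduces to simply connected Lie groups via the coreflection $G \mapsto G^\circ$ (whose counit is inverted by $\tangentate$, so the two endomorphism operads agree), identifies the restricted functor with $\forget\Lie$, and then invokes the linear reconstruction theorem over the infinite field $\reals$. Your construction of the comparison map $\Lie \to \autooperad\tangentate$ and your free-Lie-algebra/scaling analysis are consistent with that strategy, so up to this point you and the paper coincide.

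The finite-dimensionality issue you raise is genuine, and it is precisely the step the paper elides: the displayed factorization asserts $\simplyconnectedLiegroups = \alg\Lie$, whereas Lie's theorems give an equivalence onto \emph{finite-dimensional} Lie algebras only. Restricting the domain of a functor imposes fewer naturality constraints and so can only enlarge the endomorphism operad; the reconstruction theorem by itself therefore yields an injection $\Lie \hookrightarrow \autooperad\tangentate$, and one must still rule out natural operations that exist only over finite-dimensional Lie algebras. Neither of your routes closes this. Route (1) defers it (``argue that restriction \dots does not enlarge''), and note that the paper's own separator machinery does not apply directly to $\tangentate$, since $\tangentate$ has no left adjoint landing in finite-dimensional Lie groups (the free Lie algebra on $n \geq 2$ generators is infinite-dimensional). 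Route (2) leaves the category at hand, since $\mathfrak{g} \otimes \reals[t]$ is infinite-dimensional and $t \mapsto 1$ does not descend to the truncations, exactly as you observe. Concretely, the residual difficulty sits at tuples generating a non-nilpotent subalgebra: your scaling argument on free nilpotent quotients pins down the operation wherever the inputs generate a nilpotent subalgebra, but no morphism from a nilpotent Lie algebra can hit a tuple such as $(e,f)$ in $\mathfrak{sl}_2$ (its nilpotent subalgebras are one-dimensional), and equivariance under $\Aut(\mathfrak{sl}_2)$ alone leaves extra parameters already in arity $3$. So your write-up matches the paper's strategy and correctly diagnoses its weak point, but the finite-dimensional comparison still needs an actual argument before either version is complete.
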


\subsection{Cohomology operations}%
\label{subsec:cohomology}

In the 1950s, the early category theorists applied then relatively new
notions of naturality to study cohomology operations, the elements of
the natural endomorphism monoid of the cohomology functor%
~\cite{doi:10.1073/pnas.37.1.58,
doi:10.2307/1969820,doi:10.2307/1969702,doi:10.2307/1969849,
doi:10.1016/0001-8708(72)90004-7, 
doi:10.5169/seals-25343
}.

Cohomology also has natural multilinear operations, most notably the cup
product, and there are many examples where this multilinear structure
gives an easy obstruction to the existence of some topological map.
This points to the utility of a general framework with which to discuss
natural multi-ary operations on a functor.

\subsection{Manifolds}%
\label{subsec: manifolds}
In the context of the geometry of manifolds, there has been a thread of 
research on natural differential operators on various vector bundles 
associated functorially to manifolds. 
Possibly the earliest results along these lines in the literature are
due to Palais~\cite{doi:10.1090/s0002-9947-1959-0116352-7}. 
We can recast his results as a computation of the endomorphism monoid 
of the functor of differential forms
\[
	\begin{tikzcd}
		\upOmega^* : \mathsf{Diff}\op \longrightarrow \vect\reals
		\longrightarrow \Sets
	\end{tikzcd}
\]
In essence, he computes that the endomorphism monoid of $\upOmega^*$
is spanned by scalar multiplication and exterior differentiation.

Over the years these results have been generalized, both to other 
tensorial constructions and to multilinear operators~%
\cite[\S\thinspace 34]{doi:10.1007/978-3-662-02950-3}. For example, 
bilinear natural maps on tensor powers of the tangent and cotangent
bundle are fully classified. These contain mostly expected operations
like the wedge product, Lie bracket, and variations thereof but there is
an exceptional bilinear operator acting on certain spaces of densities
on $1$-manifolds.

We can ask a few simple questions intended to reformulate these 
questions of geometric naturality in our language.
Some of the answers may already be known to geometers.

\begin{question*} 
What is the endomorphism operad of differential forms? 
\end{question*}
The natural guess is that it contains only scalar multiplication, 
	exterior differentiation, and the wedge product, i.e., that
	the endomorphism operad, properly construed, is the operad governing
	commutative algebras equipped with a square zero derivation.
\begin{question*}
What is the endomorphism operad of differential forms 
	viewed as a functor with domain a category of manifolds with some
	additional structure?  For instance, what is the endomorphism operad 
	in the Riemannian setting?
\end{question*}
With the appropriate setup, this operad includes the natural operations
for smooth manifolds along with the adjoint $\mathrm{d}^*$ of the
exterior derivative and operations derived from $\mathrm{d}$,
$\mathrm{d}^*$, and the wedge product, such as the Laplacian and the
bracket. The operad governing this collection of structures with
only the ``obvious'' relations among them is the operad governing
Batalin--Vilkovisky algebras equipped with a square zero derivation of
the product with no assumed compatibility with the BV operator.

A version of this question can also be asked for manifolds equipped
with further structure: complex 
(a different point of view on this is given by Millès%
~\cite{arXiv:14093604M}),
Kähler, and so on.

\begin{question*} 
What is the endomorphism operad of various powers of the tangent bundle?
\end{question*}
To give a fully satisfactory answer to this suite of questions we would
want a two-colored operad dealing simultaneously with the tangent and
cotangent bundle. Formulating this structure precisely is not within our
purview because some of these bundles are covariant functors and some
contravariant.

\subsection{Hochschild-type examples}
Part of the literature on Hochschild 
(co)homology, cyclic (co)homology, and the bar complex%
~\cite{%
doi:10.2307/1969820,
doi:10.2307/1970343,
doi:10.1007/BF02698807,
doi:10.1007/978-3-662-21739-9_4,
doi:10.1007/BF01077036,
doi:10.1007/s002080050343, 
doi:10.1007/s10587-007-0074-4,
doi:10.4171/JNCG/10,
doi:10.5802/aif.2417,
doi:10.24033/bsmf.2576,
MR2818707,
isbn:978-2-85629-363-8,
doi:10.4171/jncg/167,
doi:10.1016/j.jalgebra.2015.09.018,
doi:10.4171/JNCG/304,
doi:10.1093/imrn/rny241,
arXiv:1903.01437
} (this is not exhaustive) has been
devoted to finding ever-more refined natural algebraic structures
on the various homology groups and chain complexes involved in the
constructions, sometimes on restricted classes of algebras. This thread
of work can be interpreted as providing partial or full computations of
the endomorphism operad of the functor under consideration in the given
context.

\subsection{Homotopical versions}
All of the statements in this paper are one-categorical and rigid.
It is reasonable (probably more reasonable) to ask about a homotopically
coherent version of this story.

We outline a speculative application. Any category $\ground$ can be 
viewed as enriched in sets, and then the identity functor of $\ground$ 
is the forgetful functor for representations of the trivial monoid. 
The endomorphism monoid in this setting is called the
\emph{center} of $\ground$.

There are various settings in enriched category theory in which 
the notion of a \emph{derived center} has been defined%
~\cite{doi:10.1016/j.aim.2012.04.011}.
Perhaps there is something interesting to say about the \emph{derived 
central operad}, defined along the lines of%
~\ref{definition: central operad}.

For example, it is known that the derived center of the category of 
simplicial algebras over a Lawvere theory, viewed as enriched in 
simplicial sets, is homotopy
equivalent to the ordinary center of discrete
algebras over the same theory%
~\cite{doi:10.1007/s00209-016-1744-4}.
But perhaps there is higher homotopical information in the derived 
central operads of such categories.

\section{Operadic approximation}%
\label{section: approximation}
Because of the adjunction $\algfunctor \dashv \autooperadfunctor$
any functor $F$ admits a \emph{universal operadic approximation}
$\autooperad F$ so that the category of $\autooperad F$-algebras is
universal among those categories of operadic algebras which accept a
restriction functor compatible with $F$ from its domain:
\[
	\begin{tikzcd}
		\domain
		\ar[dr,"F",swap]
		\ar[rr,bend left]
		\ar[r] 
		&
		\alg {\autooperad F}
		\dar
		\rar[dashed,"!"]
		& 
		\alg P
		\ar[dl]
		\\
		& 
		\ground.
	\end{tikzcd}
\]
Here the functor $\domain\to \alg{\autooperad F}$ is the $F$ component
of the counit of the adjunction between algebras and endomorphisms.

In the case that $F$ is right adjoint to a functor $L$, we can
understand the endomorphism operad of $F$ as giving a universal
operadic approximation to the monad $FL$. That is, there is a natural
transformation from the monad associated to the endomorphism operad
$\autooperad F$ to $FL$, and moreover $\autooperad F$ is
terminal among such operads.

\subsection{Loop spaces}
%
%
%

Historically, the first explicit use of the word operad
was in the context of the
recognition principle%
~\cite{doi:10.1007/BFb0067491} which, 
suitably interpreted, identifies an operad that naturally acts
on the $n$-fold loop space functor and then characterizes 
the essential homotopical image of that functor (in, say,
connected spaces) as the category of representations of that operad.
This is a case where the universal operadic approximation recovers
and thus characterizes the functor in question up to homotopy.

\subsection{Groups and sets}

An interesting example is the forgetful functor from groups to
sets. It is well-known that this
forgetful functor is monadic but not operadic (the category of groups
is not a category of algebras over an operad in sets). What is the
operad that describes groups the closest?

\begin{example}%
\label{example: group to set}
Let
\[
	\forget{\Grp} : \Grp \longrightarrow \Sets
\]
be the forgetful functor from groups to sets.
Then $\autooperad {\forget{\Grp}}(n)$ is isomorphic to
the free group on $n$ generators.

Moreover, we can explicitly specify the operadic structure. 
The $\symgroup n$-action is by permutation of generators.
The operadic composition is by word substitution: 
given a free group element $w_1$ on
$x_1,\ldots, x_m$ and a free group element $w_2$ on $y_1,\ldots, y_n$,
then $w_1\circ_i w_2$ is obtained by replacing $x_i$ with $w_2$ and
reindexing.

There are algebras over this operad which are not groups. For example,
any unital \emph{semigroup with involution} admits an action by this
operad. 
So this furnishes an example of a non-trivial monad whose universal
operadic approximation generates a different monad.
\end{example}

\subsection{Singular chains}

We have already discussed the endomorphism operads of the functors of 
cohomology~[\sectionref{subsec:cohomology}] and
differential forms~[\sectionref{subsec: manifolds}].
\begin{question*}
What is the endomorphism operad
of the singular cochain functor
\[
	\mathrm{C}^*:\Top\op\longrightarrow\chains_{\abelian}?
\]
\end{question*}
Of course, this is another context ripe for a homotopical version
of our question. 
Various authors have described~$\einfinity$-operads which act 
functorially on cochains%
~\cite{%
doi:10.1016/S0040-9383(99)00053-1,
doi:10.1017/S0305004102006370,
doi:10.1090/s0894-0347-03-00419-3,
doi:10.1007/s10240-006-0037-6
}. 
By adjunction we get an operad map from any such $\einfinity$-operad 
to the endomorphism operad $\autooperad {\mathrm{C}^*}$ of the cochain 
functor. 
Indeed one of the main results of McClure and Smith is that their 
$\einfinity$-operad is a suboperad of the endomorphism operad of the normalized singular cochain functor.

But we can ask directly: is this map a weak equivalence? 
For some choice of $\einfinity$-operad, is it an isomorphism? 
One of Mandell's results 
is that the lift of $\mathrm{C}^*$ to a functor
\[
	\Top\op\longrightarrow\alg{\einfinity}
\]
is not full (it is homotopically faithful when restricted to finite type
nilpotent spaces). Thus if some version of 
$\einfinity$ is the endomorphism operad of $\mathrm{C^*}$ then 
this is another example where the universal operadic approximation 
fails to yield an equivalence of categories.

\section{Reconstruction of operads}%
\label{section: reconstruction}
Now, following the point of view of duality in the style of Tannaka,
we look at the question of whether we can reconstruct an operad $P$
as the endomorphism operad of the forgetful functor $\forget P$ from
$P$-algebras to $\ground$.

The previous section considered the counit of the $\repfunctor \dashv
\autooperadfunctor$ adjunction.
The question we are considering here is about the unit of the
adjunction, and
in particular whether it is an isomorphism.

\subsection{Reconstruction theorems}
We begin with a variation of a known result about endomorphism monoids.

\begin{restatable}[Reconstruction for monoids]{theorem}
{monoidreconstruction}%
\label{theorem: reconstruction for monoids}
Suppose that $\ground$ is
strongly separated by the monoidal unit.
Then for every monoid $M$, the unit
\[
	\begin{tikzcd}[ampersand replacement=\&]
		M \ar[r]
		\& \automonoid {\forget {M}}
	\end{tikzcd}
\]
of the $\repfunctor \dashv \automonoidfunctor$ adjunction, is an
isomorphism.  In other words, $\repfunctor$ is fully faithful.
\end{restatable}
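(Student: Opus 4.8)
The plan is to exhibit the unit $\eta_M\colon M \to \automonoid{\forget M}$ as an isomorphism by producing an explicit retraction off the regular representation, and then invoking strong separation to promote a bijection on global elements to an honest isomorphism of $\ground$. Write $u\colon \monu \to M$ and $\mu\colon M\otimes M\to M$ for the unit and multiplication. First I would make $\eta_M$ concrete: by the universal property of the end $\automonoid{\forget M} = \coint{\rep M}[\forget M(-),\forget M(-)]$, the map $\eta_M$ is classified by the family of action maps $a_V\colon M\otimes V\to V$, equivalently by their adjuncts $M\to[V,V]$. These satisfy the wedge condition precisely because a morphism of representations is \emph{defined} to be a map commuting with the actions, so $\eta_M$ is well defined; that it respects the monoid structures is a separate routine check.

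Next I would build a candidate inverse by projecting the end onto the component indexed by the regular representation $M$ (acting on itself by $\mu$) and restricting along the unit:
\[
	r\colon \automonoid{\forget M}\xrightarrow{\ \pi_M\ }[M,M]\xrightarrow{\ [u,M]\ }[\monu,M]\cong M.
\]
Using $\mu\circ(u\otimes M)=\mathrm{id}_M$ one computes $r\circ\eta_M=\mathrm{id}_M$, so $\eta_M$ is a split monomorphism, and $r$ is the only possible inverse.

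It remains to see that $r$ is a left inverse, and this is where the hypothesis enters. Since $\hom{\ground}{\monu}{-}$ preserves limits and ends are limits, the global elements of $\automonoid{\forget M}$ are exactly the ordinary natural endomorphisms of $\forget M$, and on global elements $\eta_M$ sends a point $m_0\colon\monu\to M$ to ``act by $m_0$''. Given a natural endomorphism $\phi=(\phi_V)$, set $m_0 \coloneqq \phi_M\circ u$. For each global element $v\colon \monu\to V$ the map $\tilde v \coloneqq a_V\circ(M\otimes v)\colon M\to V$ is a morphism of representations, so naturality of $\phi$ gives $\phi_V\circ\tilde v=\tilde v\circ\phi_M$; restricting along $u$ and simplifying yields $\phi_V\circ v=a_V\circ(m_0\otimes v)=\eta_M(m_0)_V\circ v$ for every $v$. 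Thus $\phi_V$ and $\eta_M(m_0)_V$ agree after precomposition with every global element of $V$. Since $\ground$ is strongly separated by $\monu$, the functor $\hom{\ground}{\monu}{-}$ is faithful, so $\phi_V=\eta_M(m_0)_V$ for all $V$, i.e.\ $\phi=\eta_M(r(\phi))$. Hence $\eta_M\circ r$ and $\mathrm{id}$ agree on all global elements, and faithfulness upgrades this to $\eta_M\circ r=\mathrm{id}$; together with $r\circ\eta_M=\mathrm{id}$ this makes $\eta_M$ an isomorphism. The reformulation as full faithfulness of $\repfunctor$ is then the standard fact that a left adjoint is fully faithful exactly when its unit is invertible.

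The main obstacle is the final identification $\phi_V=\eta_M(m_0)_V$. The naturality of $\phi$ available from the definition is only with respect to genuine morphisms of representations, which is strictly weaker than $\ground$-enriched naturality, so one cannot conclude the equality directly: the two maps are produced only agreeing on global elements. The entire role of strong separation is to close precisely this gap, by guaranteeing that morphisms of $\ground$ are detected on global elements out of $\monu$; indeed, were the end in the definition interpreted in the enriched sense, the statement would follow formally from the enriched Yoneda lemma and no separation hypothesis would be required. A secondary point to treat carefully is the coherence bookkeeping in the computations $\tilde v\circ u=v$ and $\tilde v\circ m_0=a_V\circ(m_0\otimes v)$, and the compatibility of $\eta_M$ with multiplication, but these are mechanical once the separation step is in hand.
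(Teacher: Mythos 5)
Your proof is correct, and it reaches the conclusion by a route that is organized differently from the paper's even though both pivot on the same two ideas: probing with the regular representation and reducing to global elements out of $\monu$. The paper restricts the end to the full subcategory on the single object $M$, invokes the separator lemma~[\ref{proposition: separators, monoidal case}] to see that this restriction is monic and the fact that self-enrichments are faithfully tensored over themselves to see that the unit is monic, and then closes the loop with a chain of injections $M_0\hookrightarrow\cdots\hookrightarrow M_0$ composing to the identity. You instead build an explicit two-sided inverse $r=[u,M]\circ\pi_M$ and prove $\eta_M\circ r=\mathrm{id}$ directly: your orbit maps $\tilde v=a_V\circ(M\otimes v)$ are exactly the morphisms $L(s)\to d$ that drive the separator lemma, so you are in effect re-proving the relevant special case of that lemma inline, which makes your argument self-contained (no appeal to the faithful-tensoring input either, since the retraction gives injectivity of $\eta_M$ for free). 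Two small remarks. First, you only ever use faithfulness of $\hom{\ground}{\monu}{-}$, never conservativity, so your argument establishes the statement under the weaker hypothesis that $\monu$ is merely separating; this is not an error, just a sharpening worth flagging. Second, your closing observation about where the hypothesis is genuinely needed (ordinary dinaturality is weaker than enriched naturality, and separation is what closes that gap) matches the paper's own discussion following the theorem statement about the enriched Tannakian variant.
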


In the formalism of Tannaka, one considers a variant of the functor
$\repfunctor$ where instead of considering maps $M \to [X,X]$, one uses
enriched maps $[M, [X,X]]$. By doing so, the functor $\repfunctor$ sends
monoids to $\ground$-enriched categories with a $\ground$-enriched
functor to $\ground$.
In such an enriched context, the result of the
theorem holds without a separation axiom for the unit because in
a closed category, the monoidal unit is always a strong
separator in the enriched sense%
~\cite{doi:10.1007/bfb0084235,isbn:978-2856297735}.

In our context, some hypothesis on the ground category is
definitely necessary. For example, the identity functor on
$\integers$-graded $R$-modules can also be viewed as the
forgetful functor for algebras over the trivial $R$-algebra $R$.
If we had reconstruction, then $\automonoidfunctor$ applied to
this identity functor would recover the graded $R$-algebra $R$.
But the endomorphism monoid of this identity functor is
$R^\integers$, not $R$%
~[\ref{lemma:identity on graded R-modules}].

\begin{remark}
Two rings are equivalent in the sense of Morita if their the categories
of representations are equivalent. There are equivalent
rings which are not isomorphic.  So the variant of the functor
$\repfunctor$ that only remembers the category $\rep M$ and not
the forgetful functor $\forget M$ to $\ground$ is not fully
faithful in general. A more refined statement is that an equivalence
in the sense of Morita compatible with forgetful functors must be
induced by an isomorphism of rings.
\end{remark}

We have a similar theorem for operads but with a radically
strengthened hypothesis.

\begin{restatable}[Reconstruction for linear operads]{theorem}
{operadreconstruction}%
\label{thm: infinite field reconstruction}
Let $P$ be an operad in vector spaces over an infinite field $\fieldk$.
Then the unit
\[
	P\longrightarrow \autooperad {\forget P}
\]
is an isomorphism.
\end{restatable}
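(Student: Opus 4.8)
The plan is to show that the unit map $P \to \autooperad{\forget P}$ is an isomorphism by computing the endomorphism operad in each arity directly and matching it against $P(n)$. Fix an arity $n$ and write down explicitly what an element of $\autooperad{\forget P}(n)$ is: it is a $\fieldk$-natural transformation $\forget P^{\otimes n} \to \forget P$, i.e. an end $\coint{\alg P}[(\forget P A)^{\otimes n}, \forget P A]$ over all $P$-algebras $A$. The first step is to identify the free $P$-algebras $\free P(V)$ on a vector space $V$ and observe, via the adjunction defining free algebras, that natural transformations are determined by their values on free algebras. Concretely, a natural transformation is a compatible family of maps $(\forget P \free P V)^{\otimes n} \to \forget P \free P V$, natural in $V$, and by density/the fact that every algebra is a reflexive coequalizer of free algebras, the end over all algebras reduces to an end over the free algebras (equivalently, over $\vect\fieldk$).

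Next I would pin down the underlying object of a free $P$-algebra. Over a field, $\forget P \free P V = \bigoplus_{k \ge 0} P(k) \otimes_{\symgroup k} V^{\otimes k}$. I would evaluate on the free algebra on a finite-dimensional $V$ and use naturality in $V$ to extract the ``multilinear'' or ``polynomial'' part. The key structural point is that a natural transformation $\forget P^{\otimes n} \to \forget P$ assigns to the $n$ generators $x_1,\dots,x_n$ of $\free P(\fieldk^n)$ (the standard basis vectors) an element of $\forget P \free P(\fieldk^n) = \bigoplus_k P(k)\otimes_{\symgroup k}(\fieldk^n)^{\otimes k}$, and naturality under all linear endomorphisms of $\fieldk^n$ forces this element to be $\symgroup n$-equivariantly determined. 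There is an obvious candidate map $P(n) \to \autooperad{\forget P}(n)$ — the unit — sending an operation $\mu \in P(n)$ to the natural transformation $(a_1,\dots,a_n)\mapsto \mu(a_1,\dots,a_n)$; I must show this is bijective.

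The heart of the argument, and the step where the infinite-field hypothesis enters, is an invariant-theory computation: I must show that the polynomial naturality constraints cut the evaluation on generators down to exactly the multilinear component $P(n) \otimes_{\symgroup n}(\fieldk^n)^{\otimes n}$ isolated in the multidegree $(1,1,\dots,1)$. The mechanism is that naturality with respect to the diagonal scaling maps $x_i \mapsto t_i x_i$ (for scalars $t_i \in \fieldk$) forces the evaluation to be multihomogeneous, and a natural transformation out of $\forget P^{\otimes n}$ must be multilinear of multidegree $(1,\dots,1)$ in the $n$ inputs. Over an \emph{infinite} field one has enough scalars $t_i$ to separate distinct multidegrees: a polynomial in the $t_i$ that vanishes under the constraint for all choices of scalars must vanish coefficientwise. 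This is exactly the place the finite-field counterexample breaks down, since over $\ffield q$ there are not enough scalars to force multihomogeneity (for instance $t^q = t$ collapses degrees), which is why the companion negative result in the finite characteristic and set-theoretic cases holds.

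Having isolated the multidegree-$(1,\dots,1)$ part, I would finish by identifying that piece. The multilinear component of $\forget P \free P(\fieldk^n)$ in which each generator appears exactly once is canonically $P(n)$ (this is essentially the defining property of an operad: the arity-$n$ component records the multilinear operations in $n$ distinct variables). The $\symgroup n$-equivariance coming from permuting the generators matches the symmetric group action on $P(n)$, and a routine check confirms that the operadic composition and unit on $\autooperad{\forget P}$ correspond to those of $P$ under this identification. Thus the candidate unit map is an isomorphism in every arity, compatibly with the operad structure. I expect the main obstacle to be making the reduction from ``natural transformation on all algebras'' to ``multilinear data on free algebras'' fully rigorous — in particular justifying that the end over $\alg P$ is computed on free algebras and that the infinite-field separation argument correctly forces multihomogeneity — rather than the final identification, which is formal once the multilinear part has been isolated.
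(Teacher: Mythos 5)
Your proposal is correct and takes essentially the same route as the paper: restrict to the free algebra on an $n$-dimensional space (the paper justifies injectivity of this restriction with its separator lemma rather than by density over free algebras), then use the infinite-field hypothesis to isolate the multilinear part of $P \triangletimes \fieldk^n$ and identify it with $P(n)$, finally checking the composite with the unit is the identity. The paper packages the invariant-theory step as $\Hom_{\gl V}(V^{\otimes n}, P \triangletimes V) \cong P(n)$ via central scalars of large order and Schur--Weyl, while you isolate the multidegree $(1,\dots,1)$ component with the diagonal scalings directly; this is the same computation up to repackaging.
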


This fails both for operads in sets and for vector spaces over finite
fields, as shown below.

\subsection{Central operads and failures of reconstruction}
As mentioned above, the endomorphism monoid of the identity functor of
$\ground$ is called the \emph{center} of $\ground$.

In line with this definition we set:
\begin{definition}[Central operad]
\label{definition: central operad}
We shall call the endomorphism operad of the identity functor
\[
	\begin{tikzcd}
		\ground \ar[rr,"\id{}"] && \ground
	\end{tikzcd}
\]
the \emph{central operad} of $\ground$.
\end{definition}

We now turn our attention toward the computation of central operads of
several categories. Each category whose central operad is not the
identity operad is an example of failure of reconstruction.

This simple test already displays interesting behavior and often
obstructs reconstruction. Here are two examples.
\begin{itemize}
	\item The central operad of the category of sets is
	      the $\Perm$ operad;
	\item For $R$ a commutative ring, the central operad of the category
	      of $\integers$-graded $R$-modules is
	      $R^\integers$, concentrated in arity one and acting by scalar
	      multiplication separately in each degree.
\end{itemize}

On the other hand, for ungraded $R$-modules the central operad is the
trivial operad (i.e., $R$ in arity one acting by the module action).
But this is not enough to guarantee reconstruction in general for
$R$-modules, as witnessed by the following example.

\begin{restatable}
{proposition}
{comminpositivecharacteristic}%
\label{prop: comm in positive characteristic}
Let us consider
\[
	\forget \com : \catofalg \com \longrightarrow \vect {\ffield q},
\]
the forgetful functor from $\ffield q$-linear commutative algebras to 
$\ffield q$-vector spaces.

Then the endomorphism operad $\autooperad {\forget \com}$ can be
presented as
\begin{itemize}
	\item generated by a multiplication operation $\mu$ of arity two and a
	      power operation $P_q$ of arity one;
	\item subject to the relations that $\mu$ is commutative and
	      associative and
	      \[
	      	\mu \circ (P_q \otimes P_q) = P_q \circ \mu.
	      \]
\end{itemize}
\end{restatable}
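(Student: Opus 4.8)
The plan is to compute the symmetric sequence $\autooperad{\forget \com}$ explicitly and then match it against the stated presentation. First I would pin down the underlying spaces. By Definition~\ref{definition: endo operad and monoid}, an element of $\autooperad{\forget \com}(n)$ is a natural family of $\ffield q$-linear maps $\theta_A\colon A^{\otimes n}\to A$, with $A$ ranging over commutative $\ffield q$-algebras. Writing $F_n=\ffield q[x_1,\dots,x_n]$ for the free commutative algebra on $n$ generators, naturality along the evaluation morphism $F_n\to A$ sending $x_i\mapsto a_i$ shows that $\theta$ is determined by the element $w\coloneqq\theta_{F_n}(x_1\otimes\cdots\otimes x_n)\in F_n$, via $\theta_A(a_1\otimes\cdots\otimes a_n)=w(a_1,\dots,a_n)$ on decomposable tensors; as these span $A^{\otimes n}$ and $\theta_A$ is linear, the operation $\theta$ is precisely evaluation of the polynomial $w$. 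Thus $\autooperad{\forget \com}(n)$ embeds into $F_n$, and I must characterise its image.

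The image consists of those $w$ that are $\ffield q$-linear in each variable separately. Testing additivity in $x_i$ against the universal algebra with an extra variable turns it into the formal identity $w(\dots,x_i+x_i',\dots)=w(\dots,x_i,\dots)+w(\dots,x_i',\dots)$, so by the classical classification of additive polynomials in characteristic $p$ the variable $x_i$ occurs only through Frobenius powers $x_i^{p^{a}}$. Imposing in addition $\ffield q$-homogeneity of degree one, $w(\dots,\lambda x_i,\dots)=\lambda\,w(\dots,x_i,\dots)$ for $\lambda\in\ffield q$, forces $\lambda^{p^a}=\lambda$ for every $\lambda\in\ffield q$, that is, $p^a$ must be a power of $q$. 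Carrying this out in each variable, I would conclude that $\autooperad{\forget \com}(n)$ has $\ffield q$-basis the monomials $x_1^{q^{j_1}}\cdots x_n^{q^{j_n}}$ with $(j_1,\dots,j_n)\in\mathbb{N}^n$. Under these identifications the $\symgroup n$-action permutes the variables and operadic composition $\circ_i$ is substitution of polynomials.

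Now I would compare with the presented operad $Q$ generated by $\mu$ and $P_q$ subject to the three relations. The operation $\mu$ corresponds to $x_1x_2$ and $P_q$ to $x_1^{q}$, and the defining relations hold in $\autooperad{\forget \com}$: commutativity and associativity read $x_1x_2=x_2x_1$ and $(x_1x_2)x_3=x_1(x_2x_3)$, while $\mu\circ(P_q\otimes P_q)=P_q\circ\mu$ is the identity $x_1^{q}x_2^{q}=(x_1x_2)^{q}$. This produces a map of operads $\Phi\colon Q\to\autooperad{\forget \com}$. It is surjective because every basis monomial arises from the generators: iterating $\mu$ gives the $n$-fold product $\mu_n$, iterating $P_q$ gives $P_q^{\circ j}\leftrightarrow x^{q^{j}}$, and
\[
	\mu_n\circ\bigl(P_q^{\circ j_1}\otimes\cdots\otimes P_q^{\circ j_n}\bigr)
\]
realises $x_1^{q^{j_1}}\cdots x_n^{q^{j_n}}$.

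It remains to see that $\Phi$ is injective, for which I would put $Q(n)$ in normal form. Using the relation $P_q\circ\mu=\mu\circ(P_q\otimes P_q)$ repeatedly one pushes every occurrence of $P_q$ in a composite down toward the inputs; commutativity and associativity then collapse the remaining tree of multiplications into a single $n$-fold product of the now Frobenius-decorated leaves. Since each input of an arity-$n$ operation is used exactly once, this rewrites an arbitrary element of $Q(n)$ as an $\ffield q$-linear combination of the elements $\mu_n\circ(P_q^{\circ j_1}\otimes\cdots\otimes P_q^{\circ j_n})$, which therefore span $Q(n)$. As $\Phi$ sends these spanning elements to the distinct basis monomials of $\autooperad{\forget \com}(n)$, they are linearly independent, $\Phi$ is a bijection on them, and hence an isomorphism of operads. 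The main obstacle is the arity-one classification of the second paragraph: the additive-polynomial argument must be run over the universal (hence infinite) algebra $F_n$, so that the characteristic-$p$ identities are formal rather than identities of functions on $\ffield q$, and the homogeneity refinement---absent over the prime field $\ffield p$, where every $x^{p^{a}}$ would survive---is exactly what singles out the powers of $q$ and the single generator $P_q$.
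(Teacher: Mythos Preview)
Your proof is correct and follows essentially the same route as the paper: embed $\autooperad{\forget\com}(n)$ into the polynomial algebra via naturality on free commutative algebras, then characterize the image by testing multilinearity (an extra variable for additivity, scalars for $\ffield q$-homogeneity) to obtain the $q$-power monomials. Your normal-form argument matching this description to the presentation by $\mu$ and $P_q$ is more thorough than the paper's, which simply records the correspondence between generators and monomials and leaves the verification implicit.
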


In particular $\autooperad{\forget\com}\ne \com$ so reconstruction fails
for operads in vector spaces over finite fields.

\begin{remark}
The natural action of $P_q$ on a $\com$-algebra is the action of taking
the $q$th power. If $q=p^n$ it may seem as though there should be be a
natural \emph{$p$-th} power action, but the $p$-th power action is not
$\ffield q$-linear.
\end{remark}

\section{Main lemmas}%
\label{section: main lemmas}
Now we turn to the rigorous proofs of the statements we have made in
the earlier sections. Our main tool, introduced in this section, is
a reduction of the size of the cointegral computing the endomorphism
operad (or monoid) which requires a datum and an assumption:
\begin{itemize}
	\item we assume given a separator $S$ for the category $\ground$, and
	\item we assume that the functor $F:\domain\to\ground$ is a right
	      adjoint.
\end{itemize}
We will compute our cointegral over a category built out of the
separator $S$ and the left adjoint to $F$ instead of over all of
$\domain$.

The usefulness of the reduction in computation depends directly on the
size and complexity of $S$ and the objects in it.
In particular, it works well for the category of sets (separated by the
point) and $R$-modules (separated by $R$).

The following definition is standard and is recalled for convenience.
\begin{definition}[Separating set]
A set of objects $S \subset \ground$ is
\emph{separating} if the functor
\[
	\prod_{s \in S} \hom \ground s - : \ground \longrightarrow \Sets
\]
is faithful. 
It is \emph{strongly separating} if moreover it is conservative.
\end{definition}

Now we build the subcategories that we will use to restrict our
cointegral.

\begin{notation}
Let $S \subset \ground$ be a separating set and 
let $F : \domain \to \ground$ admit a left adjoint $L$. 

We write $F_{S^n}$ for the restriction of $F$ 
to the full subcategory of $\domain$ generated by
$L(s_1 \amalg \dots \amalg s_n)$ with $s_i \in S$.
We write $F_S$ for $F_{S^1}$; this is the full subcategory 
of $\domain$ generated by $L(S)$.
\end{notation}

\begin{lemma}[Separator lemma, monoidal case]%
\label{proposition: separators, monoidal case}
Let $S \subset \ground$ be a separating set and 
let $F : \domain \to \ground$ admit a left adjoint $L$. 
Then the
canonical map of endomorphism monoids
\[
	\automonoid F \longrightarrow \automonoid {F_S}
\]
is a monomorphism.
\end{lemma}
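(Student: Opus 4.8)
The plan is to show that a natural transformation of $F_S$ extends uniquely to a natural transformation of all of $F$, or at least that the restriction map loses no information. Since we are only claiming a monomorphism (not an isomorphism), it suffices to prove that if two natural endomorphisms $\alpha, \beta \colon F \to F$ agree after restriction to the subcategory generated by $L(S)$, then they are equal. The key leverage is the adjunction $L \dashv F$ together with the fact that $S$ is a separating set.

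**First I would** reduce the problem to a pointwise statement. The object $\automonoid F = \nat{\ground}{F}{F}$ is computed as a cointegral (end) $\coint{\domain}[F{-},F{-}]$, and a map into it is determined by its components $\alpha_d \in [Fd, Fd]$ for each object $d \in \domain$, subject to naturality. So to prove the restriction map is a monomorphism, I want to show that the components $\alpha_d$ for arbitrary $d$ are determined by the components $\alpha_{L(s)}$ for $s \in S$. The natural move is to test the component $\alpha_d \colon Fd \to Fd$ against the separating set $S$ in $\ground$: since $S$ separates, $\alpha_d$ is determined by its postcompositions with, or equivalently by its effect measured through, maps out of objects of $S$ into $Fd$. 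By the adjunction $L \dashv F$, a map $s \to Fd$ in $\ground$ corresponds to a map $L(s) \to d$ in $\domain$, and naturality of $\alpha$ along that map relates $\alpha_d$ to $\alpha_{L(s)}$, which lives in the subcategory $\domain_S$ where $\alpha$ and the putative second transformation $\beta$ already agree.

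**The main obstacle** will be packaging this separation-plus-adjunction argument cleanly enough to conclude equality of the full components $\alpha_d$ and $\beta_d$, rather than merely equality after probing by single elements of $S$. Concretely, I would take a map $f \colon s \to Fd$ with $s \in S$, transpose it to $\tilde f \colon L(s) \to d$, and use the naturality square for $\alpha$ (resp.\ $\beta$) associated to $\tilde f$ to express $\alpha_d \circ (\text{image of } f)$ in terms of $\alpha_{L(s)}$. Because $\alpha_{L(s)} = \beta_{L(s)}$ by hypothesis, this forces $\alpha_d$ and $\beta_d$ to agree after precomposition with every $f \colon s \to Fd$ ranging over all $s \in S$; the separating (faithfulness) property of $S$ then upgrades this to $\alpha_d = \beta_d$. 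I expect the only delicate points to be the correct handling of the transposition under the adjunction and confirming that the naturality square indeed involves the component at $L(s)$, which is exactly where the restricted data lives.

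**Finally**, having established this for every $d \in \domain$, the two transformations agree globally, so the restriction map $\automonoid F \to \automonoid{F_S}$ is injective on points and hence a monomorphism; I would remark that the same adjunction-and-separation mechanism is what will later be iterated to handle the operadic (multi-ary) case, where one replaces $S$ by $S^n$ and $L(s)$ by $L(s_1 \amalg \cdots \amalg s_n)$.
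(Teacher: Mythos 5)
Your core mechanism --- transpose a map $f\colon s\to F(d)$ across $L\dashv F$, run the naturality square for $\tilde f\colon L(s)\to d$ to express $\alpha_d\circ f$ in terms of $\alpha_{L(s)}$, and then invoke the separating property of $S$ --- is exactly the mechanism of the paper's proof, and that part of your argument is sound. The gap is in your final inference, ``injective on points and hence a monomorphism.'' In this paper $\automonoid F$ is not a set of natural transformations but an object of $\ground$, namely the end $\coint{\domain}[F-,F-]$, and the lemma asserts a monomorphism in $\ground$; this full strength is what gets used later (for instance, the reconstruction proof applies $\hom{\ground}{\monu}{-}$ to this monomorphism, and the computations identify $\autooperad F(n)$ as a subobject of $\autooperad{F_{S^n}}(n)$). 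Your argument, as written, only tests the map against global points $\monu\to\automonoid F$, i.e.\ against honest natural transformations $F\to F$, and so only shows that $\hom{\ground}{\monu}{-}$ of the map is injective. That does not imply the map is a monomorphism unless $\monu$ is itself a separator --- and the hypotheses hand you a separating set $S$ with no stated relation to $\monu$. The paper's own example of $\integers$-graded $R$-modules is precisely a setting where $\monu$ fails to separate, so the inference genuinely breaks in the stated generality.

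The repair keeps your argument but internalizes it, which is what the paper does. Separation gives that the canonical map $\coprod_{s\to F(d)} s\to F(d)$ is an epimorphism; since $\ground$ is closed symmetric monoidal, the functor $[-,F(d)]$ sends coproducts to products and epimorphisms to monomorphisms, yielding a monomorphism $[F(d),F(d)]\to\prod_{s\to F(d)}[s,F(d)]$ in $\ground$. Precomposition with the unit of $L\dashv F$ factors this through $\prod_{L(s)\to d}[FL(s),F(d)]$, and your naturality squares become the commutativity of a diagram comparing $\automonoid F\hookrightarrow\prod_d[F(d),F(d)]$ with $\automonoid{F_S}\to\prod_s[FL(s),FL(s)]$; a two-out-of-three argument for monomorphisms then finishes. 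Equivalently, you could rerun your element-wise argument on generalized elements $Z\to\automonoid F$, whose components are maps $Z\otimes F(d)\to F(d)$, using that $Z\otimes-$ preserves the epimorphism above; either route closes the gap without changing your underlying idea.
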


\begin{proof}
For every $d \in \domain$, we claim that the map
\[
	[F(d), F(d)] \longrightarrow \prod_{L(s) \to d} [FL(s), F(d)]
\]
is a monomorphism. For this, one can use the unit of the adjunction
$L \dashv F$ and show that the composite
\[
	[F(d), F(d)] \longrightarrow \prod_{L(s) \to d} [FL(s), F(d)]
	\longrightarrow \prod_{L(s) \to d}[s, F(d)]
\]
is a monomorphism. Since $S$ is separating, the canonical map
\[
	\left(\coprod_{L(s) \to d} s\right) \isonat
	\left(\coprod_{s \to F(d)} s\right) \longrightarrow F(d)
\]
is an epimorphism and since the monoidal structure of $\ground$ is
symmetric closed, the functor $X \mapsto [X, F(d)]$ sends 
coproducts to products and epimorphisms to monomorphisms.

Now by the universal property of $\automonoid F$ and $\automonoid
{F_S}$, the following diagram commutes
\[
	\begin{tikzcd}
		\automonoid F \rar \dar[hook]& \automonoid {F_S} \rar
		& \displaystyle\prod_s [FL(s), FL(s)]
		\dar[start anchor={[yshift=2.3ex]}]
		\\
		\displaystyle\prod_d {[F(d),F(d)]} \ar[rr, hook] && \displaystyle
		\prod_{Ls \to d} {[FL(s), F(d)]}
	\end{tikzcd}
\]
which implies that $\automonoid F \to \automonoid {F_S}$ is a
monomorphism.
\end{proof}

\begin{lemma}[Separator lemma, operadic case]%
\label{proposition: separators, operadic case}
Let $S \subset \ground$ be a separating set and 
let $F : \domain \to \ground$ admit a left adjoint $L$. 
Then for every natural
$n$, the canonical map of components of endomorphism operads
\[
	\autooperad F (n) \longrightarrow \autooperad {F_{S^n}}(n)
\]
is a monomorphism.
\end{lemma}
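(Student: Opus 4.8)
The plan is to mirror the proof of the monoidal Separator lemma~\ref{proposition: separators, monoidal case} almost verbatim, replacing the object $F(d)$ in the source slot of the internal hom by its $n$-fold tensor power $F(d)^{\otimes n}$. Recall that the end presentation
\[
	\autooperad F(n) = \coint{\domain}[F(-)^{\otimes n}, F(-)]
\]
exhibits $\autooperad F(n)$ as a subobject of $\prod_{d}[F(d)^{\otimes n}, F(d)]$, and likewise $\autooperad{F_{S^n}}(n)$ is the end of the same integrand restricted to the full subcategory $\domain_{S^n}\subset\domain$ on the objects $L(s_1\amalg\cdots\amalg s_n)$. First I would isolate the pointwise statement: for each $d\in\domain$, the map
\[
	[F(d)^{\otimes n}, F(d)]\longrightarrow \prod_{L(s_1\amalg\cdots\amalg s_n)\to d}[F(L(s_1\amalg\cdots\amalg s_n))^{\otimes n}, F(d)],
\]
assembled from precomposition by the $F(g)^{\otimes n}$, is a monomorphism. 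Granting this, I would conclude exactly as in the monoidal case.

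For the pointwise statement, the genuinely new ingredient is to produce an epimorphic cover of $F(d)^{\otimes n}$ out of the separating set. Because $S$ is separating, the canonical map $\coprod_{h\colon s\to F(d)} s\to F(d)$ (indexed by all maps from objects of $S$ to $F(d)$) is an epimorphism; since the monoidal structure is closed, $-\otimes X$ is a left adjoint and so preserves epimorphisms, whence the $n$-fold tensor power
\[
	\Bigl(\coprod_{h\colon s\to F(d)} s\Bigr)^{\otimes n}\cong \coprod_{(h_1,\dots,h_n)} s_1\otimes\cdots\otimes s_n\longrightarrow F(d)^{\otimes n}
\]
is again an epimorphism, the coproduct running over $n$-tuples of maps $h_i\colon s_i\to F(d)$ with $s_i\in S$. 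Applying $[-,F(d)]$, which turns coproducts into products and epimorphisms into monomorphisms, yields a monomorphism $[F(d)^{\otimes n}, F(d)]\hookrightarrow \prod_{(h_1,\dots,h_n)}[s_1\otimes\cdots\otimes s_n, F(d)]$. The point is then to check that this monomorphism factors through the pointwise map above. Using $L\dashv F$, an $n$-tuple $(h_i)$ corresponds to a single $g\colon L(s_1\amalg\cdots\amalg s_n)\to d$, and a short computation with the unit $\eta$ gives $F(g)^{\otimes n}\circ\bigl((\eta\iota_1)\otimes\cdots\otimes(\eta\iota_n)\bigr)=h_1\otimes\cdots\otimes h_n$, where the $\iota_i$ are the coproduct inclusions. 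Hence precomposing the pointwise map by the maps $(\eta\iota_1)\otimes\cdots\otimes(\eta\iota_n)$ recovers the separating monomorphism, forcing the pointwise map itself to be a monomorphism.

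Finally I would assemble the commuting square
\[
	\begin{tikzcd}[ampersand replacement=\&]
		\autooperad F(n) \rar \dar[hook] \& \autooperad{F_{S^n}}(n) \rar \& \displaystyle\prod_\sigma [FL\sigma^{\otimes n}, FL\sigma] \dar \\
		\displaystyle\prod_d [F(d)^{\otimes n}, F(d)] \ar[rr, hook] \&\& \displaystyle\prod_{L\sigma\to d}[FL\sigma^{\otimes n}, F(d)]
	\end{tikzcd}
\]
(writing $\sigma = s_1\amalg\cdots\amalg s_n$ and $FL\sigma = F(L\sigma)$), whose left vertical is the canonical monomorphism from the end, whose bottom is the product of the pointwise monomorphisms just established, and whose right vertical sends $(\psi_\sigma)$ to $(F(g)\circ\psi_\sigma)_{g\colon L\sigma\to d}$. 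Commutativity on the nose is precisely the dinaturality condition $F(g)\circ\phi_{L\sigma}=\phi_d\circ F(g)^{\otimes n}$ satisfied by any element $(\phi_d)$ of the end $\autooperad F(n)$. A diagram chase identical to the monoidal case then shows that the top map $\autooperad F(n)\to\autooperad{F_{S^n}}(n)$ is a monomorphism. I expect the main obstacle to be the pointwise step --- specifically, verifying that the separating epimorphism onto $F(d)$ tensors up to an epimorphism onto $F(d)^{\otimes n}$ and that the resulting cover matches, via the unit computation, the canonical map into the product over $\domain_{S^n}$; the remainder is a formal transcription of the monoidal argument.
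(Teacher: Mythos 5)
Your proposal is correct and follows essentially the same route as the paper: both reduce to the pointwise statement that $[F(d)^{\otimes n},F(d)]\to\prod[{(\overline s)}^{\otimes n},F(d)]$ is monic by tensoring the separating epimorphism $\coprod_{s\to F(d)}s\twoheadrightarrow F(d)$ up to an epimorphism onto $F(d)^{\otimes n}$, and then conclude via the same commuting square as in the monoidal case. The only cosmetic difference is that the paper routes the cover through $\coprod_{\overline s\to F(d)}{(\overline s)}^{\otimes n}$ while you work directly with the summands $s_1\otimes\cdots\otimes s_n$ and the unit maps; these are interchangeable.
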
%

\begin{proof}
The proof is similar to the previous one. Explicitly, we need to show
that
\[
	[ {F(d)}^{\otimes n}, F(d)]\longrightarrow
	\prod_{\overline s \to F(d)} [{(\overline s)}^{\otimes n}, F(d)
	]
\]
is a monomorphism. As in the previous proof, because $S$ is separating
the canonical map
\[
	\coprod_{s \to F(d)} s \longrightarrow F(d)
\]
is an epimorphism. Since the tensor structure on $\ground$ is closed,
tensorization preserves epimorphisms, so
\[
	\coprod_{\overline s \to F(d)} s_1 \otimes \dots \otimes s_n
	\isonat {\mleft(\coprod_{s \to F(d)} s\mright)}^{\mathclap{\otimes n}}
	{\mkern-4mu}
	\longrightarrow {F(d)}^{\otimes n}
\]
is an epimorphism. Using the factorization
\[
	\begin{tikzcd}
		\displaystyle\coprod_{\overline s \to F(d)} s_1
		\otimes \dots \otimes s_n
		\ar[rr, two heads] \ar[rd,start anchor={[xshift=-7ex,yshift=3ex]}]
		&& {F(d)}^{\otimes n}
		\\
		& \displaystyle\coprod_{\overline s \to F(d)}
		{(\overline s)}^{\otimes n}
		\ar[ru] &
	\end{tikzcd}
\]
we deduce that $\coprod_{\overline s \to F(d)} \overline s^{\otimes n}
\to \power{F(d)}{\otimes n}$ is again an epimorphism.
We end the proof with the fact that $X \mapsto [ X,
F(d)]$ sends coproducts to products and epimorphisms to
monomorphisms.
\end{proof}

\section{Proofs of reconstruction theorems}%
\label{section: proofs of reconstruction}
In this section we use the separator lemmas to prove our two
reconstruction theorems, for monoids in a category with a strong
separator and for operads in the category of vector spaces over an
infinite field.

We restate the first theorem.
\monoidreconstruction*
\begin{proof}
Let $M$ be a monoid in $\ground$.
Since $\ground$ is self enriched, the free representation generated
by $\monu$ is $M$ and we shall denote by
${M}^{\text{full}}$, the full subcategory of
$\rep M$ generated by this single object.
Consider the following composition:
\[
	\begin{tikzcd}
		M \rar
		& [10]\automonoid {\forget M}
		\rar
		& [10] \displaystyle\coint{M^{\text{full}}}
		[\forget M -,\forget M -].
	\end{tikzcd}
\]
We will argue that the composition is an isomorphism (which implies that
the second map is a split epimorphism).
Since the second map is also monic%
~[\ref{proposition: separators, monoidal case}] this will suffice.

To argue that the composition is an isomorphism, first we note
that the unit $M \to \automonoid {\forget M}$ is
monic because self-enrichments are always faithfully tensored over
themselves%
~\cite[4.4]{arXiv:1904.06987}.

Then the separation condition ensures that the underlying functor
\[
	{(-)}_0\coloneqq \hom{\ground}\monu -
	: \ground \longrightarrow \Sets
\]
is conservative. Since it also preserves monomorphisms, what remains to
show is that the map
\[
	\begin{tikzcd}
		M_0 \rar[hook] &
		\displaystyle\coint{M^{\text{full}}}
		\hom{\ground}{\forget M -}{\forget M -}
	\end{tikzcd}
\]
which we now know to be injective, is actually bijective.
Since the monoidal unit is a separator, the transformation
$\hom \ground M M \to \hom \Sets {M_0} {M_0}$ is also injective, thus
we have an injection between the cointegrals
\[
	\begin{tikzcd}
		\displaystyle\coint{M^{\text{full}}}
		\hom{\ground}{\forget M -}{\forget M -}
		\rar[hook] &
		\displaystyle\coint{M^{\text{full}}}
		\hom{\Sets}{{(\forget M -)}_0}
		{{(\forget M -)}_0}.
	\end{tikzcd}
\]
Using the fact that
$\hom {\rep M} M M \isonat
M_0$, one can quickly deduce that this last cointegral is canonically
bijective to $M_0$.

Then the identity of $M_0$ factors as a chain of injective functions:
\[
	\begin{tikzcd}
		M_0 \rar[hook]
		&
		\displaystyle\coint{M^{\text{full}}}
		\hom{\ground}{\forget M -}{\forget M -}
		\rar[hook]
		& M_0
	\end{tikzcd}
\]
inducing the desired bijection.
\end{proof}

In order to prove the operadic reconstruction theorem, we will need a
lemma.

\begin{lemma}%
\label{lemma: gln calculation for reconstruction}
Let $P$ be an operad in vector spaces over an infinite field $\fieldk$
and let $V$ be a vector space of dimension $n$.
Then the natural map $P(n) \otimes V^{\otimes n} \to P \triangletimes V$
induces an $\symgroup n$-equivariant isomorphism
\[
	P(n) \isonat \Hom_{\gl V}(V^{\otimes n},P\triangletimes V).
\]
\end{lemma}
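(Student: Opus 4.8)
The plan is to reduce the computation to Schur--Weyl duality over an infinite field. Write $d = \dim V = n$ and recall $P \triangletimes V \isonat \bigoplus_{k \ge 0} P(k) \otimes_{\symgroup k} V^{\otimes k}$. The map in the statement is the one adjoint to the displayed map $P(n) \otimes V^{\otimes n} \to P \triangletimes V$, namely $\phi \colon P(n) \to \Hom_{\gl V}(V^{\otimes n}, P \triangletimes V)$ sending $p$ to $v \mapsto [p \otimes v]$; this lands among the $\gl V$-equivariant maps because $\gl V$ acts trivially on $P(n)$ and diagonally on $V^{\otimes n}$. I want to produce an inverse and check compatibility with $\symgroup n$.

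First I would cut $P \triangletimes V$ down to its arity-$n$ summand. Restricting the $\gl V$-action to the scalar matrices $\lambda\,\mathrm{id}_V$, the summand $P(k) \otimes_{\symgroup k} V^{\otimes k}$ is the eigenspace on which $\lambda$ acts by $\lambda^k$, while $V^{\otimes n}$ sits in eigenvalue $\lambda^n$. An equivariant map preserves eigenvalues, and since $\fieldk$ is infinite the identity $\lambda^n = \lambda^k$ cannot hold for all $\lambda$ when $k \ne n$; hence every component of the image with $k \ne n$ vanishes, and
\[
	\Hom_{\gl V}(V^{\otimes n}, P \triangletimes V) \isonat \Hom_{\gl V}(V^{\otimes n}, P(n) \otimes_{\symgroup n} V^{\otimes n}).
\]

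Next I would pass to modules over the Schur algebra $S = \End_{\symgroup n}(V^{\otimes n})$, whose modules are the polynomial $\gl V$-representations of degree $n$ and whose module maps are the $\gl V$-equivariant maps; the infinitude of $\fieldk$ is what guarantees that the algebra generated by $\gl V$ inside $\End(V^{\otimes n})$ is all of $S$. Let $e \in S$ be the idempotent projecting onto the weight-$(1,\dots,1)$ space. Because $\dim V = n$, that weight space is spanned by the tensors $v_{\sigma(1)} \otimes \dots \otimes v_{\sigma(n)}$ and so is a copy of the regular representation $\fieldk[\symgroup n]$; evaluation $s \mapsto s(1)$ then identifies $Se \isonat \Hom_{\symgroup n}(\fieldk[\symgroup n], V^{\otimes n}) \isonat V^{\otimes n}$ as left $S$-modules. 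In particular $V^{\otimes n}$ is a projective $S$-module and $\Hom_{\gl V}(V^{\otimes n}, -) \isonat \Hom_S(Se, -)$ is the exact functor $W \mapsto eW$ extracting the weight-$(1,\dots,1)$ space. Applying this to $W = P(n) \otimes_{\symgroup n} V^{\otimes n}$ and using that $e$ acts only on the tensor factor, hence commutes with $P(n) \otimes_{\symgroup n} -$, I get
\[
	\Hom_{\gl V}(V^{\otimes n}, P(n) \otimes_{\symgroup n} V^{\otimes n}) \isonat P(n) \otimes_{\symgroup n} e V^{\otimes n} \isonat P(n) \otimes_{\symgroup n} \fieldk[\symgroup n] \isonat P(n),
\]
and one checks this composite inverts $\phi$.

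For the $\symgroup n$-equivariance, the symmetric group acts on the target by precomposition with the permutation action on $V^{\otimes n}$, which is exactly the right action of $\symgroup n \isonat \End_{\gl V}(V^{\otimes n})$, while $e V^{\otimes n} \isonat \fieldk[\symgroup n]$ carries the regular bimodule structure; tracing this through the chain matches it with the operadic action on $P(n)$. The main obstacle is that $\fieldk$ may have positive characteristic, so $\fieldk[\symgroup n]$ need not be semisimple and one cannot decompose $P(n)$ into isotypic pieces. The device that sidesteps this is the identification $V^{\otimes n} \isonat Se$: it makes $V^{\otimes n}$ projective over $S$ and turns $\Hom_{\gl V}(V^{\otimes n}, -)$ into the manifestly exact weight-space functor, which commutes with the coinvariants $P(n) \otimes_{\symgroup n} -$ on the nose and for arbitrary $P(n)$. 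Establishing this identification---equivalently the double centralizer statement $\End_{\gl V}(V^{\otimes n}) \isonat \fieldk[\symgroup n]$ for $\dim V \ge n$ over an infinite field---is the one genuinely nontrivial input, and I would either cite it or derive it from $eSe \isonat \End_{\symgroup n}(e V^{\otimes n}) \isonat \fieldk[\symgroup n]$.
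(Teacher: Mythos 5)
Your proof is correct and follows essentially the same route as the paper's: isolate the arity-$n$ summand of $P \triangletimes V$ by acting with scalars of arbitrarily large order (using that $\fieldk$ is infinite), commute $\Hom_{\gl V}(V^{\otimes n},-)$ past the coinvariants $P(n)\otimes_{\symgroup n}-$, and finish with the double centralizer identification $\End_{\gl V}(V^{\otimes n})\isonat \fieldk[\symgroup n]$ valid for $\dim V = n$ over an infinite field. Your Schur-algebra argument that $V^{\otimes n}\isonat Se$ is projective, so that $\Hom_{\gl V}(V^{\otimes n},-)$ becomes the exact weight-space functor $W\mapsto eW$ and visibly commutes with $P(n)\otimes_{\symgroup n}-$, is a more careful justification of the middle step, which the paper attributes only to the finite-dimensionality of $V$.
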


\begin{proof}
One has $\symgroup n$-equivariant isomorphisms
\begin{align*}
	\Hom_{\gl V}(V^{\otimes n},P\triangletimes V)&\isonat
	\Hom_{\gl V}(V^{\otimes n},P(n)\otimes_{\symgroup n}V^{\otimes n}).
	\\
	&
	\isonat
	P(n) \otimes_{\symgroup n} \Hom_{\gl V}(V^{\otimes n},V^{\otimes n}).
	\\
	&
	\isonat P(n).
\end{align*}
The first isomorphism comes by using scalar multiplication by
elements of arbitrarily large order (which exist since $\fieldk$ is
infinite). The second is a consequence of $V$ being finite dimensional.
For the last one, since $V$ has dimension $n$ and $\fieldk$ is
infinite one has
\[
	\Hom_{\gl V}(V^{\otimes n},V^{\otimes n}) \isonat
	\fieldk[\symgroup n]
\]
as an $\symgroup n$-bimodule%
~\cite[A.2.1]{doi:10.1007/978-3-642-30362-3}.
\end{proof}
Now we are ready for the operadic reconstruction theorem.
\operadreconstruction*

\begin{proof}
The proof follows the same general logic as the monoidal case.
Let $V$ be a vector space of dimension $n$ over $\fieldk$, then
the identity of $P(n)$ factors as a chain of monomorphisms as follows.
\begin{enumerate}
	\item The $n$-th component of the unit of the adjunction $P(n)\to
	      \autooperad {\forget P}(n)$ is a monomorphism because $\ground$
	      is faithfully tensored over symmetric sequences in $\ground$%
	      ~(\cite{doi:10.1007/bfb0072514}
	      or%
	      ~\cite[2.3.10]{doi:10.1007/978-3-540-89056-0};%
	      ~\cite[4.4]{arXiv:1904.06987});
	\item Using separators, the $n$-ary component of canonical map of
	      endomorphism operads
	      \[
	      	\autooperad{\forget P}(n)\longrightarrow
	      	\autooperad{\forget P \circ \mathrm{j}_V}(n)
	      \]
	      is a monomorphism%
	      ~[\ref{proposition: separators, operadic case}],
	      where $\mathrm{j}_V$ is the inclusion
	      \[
	      	\begin{tikzcd}
	      	{(\mathrm{Free}_P V)}^\mathrm{full}
	      		\rar[hook, "\mathrm{j}_V"]
	      		&\alg P;
	      	\end{tikzcd}
	      \]
	\item Next, let $(v_1, \dots, v_n)$ be a basis of $V$. Then given any
	      $n$\=/tuple $(a_1, \dots, a_n)$ in $P \triangletimes V$, one can
	      build a $P$-algebra map $f : \text{Free}_P V \to\text{Free}_P V$
	      such that the composite
	      \[
	      	\begin{tikzcd}
	      		V^{\otimes n} \rar["\mathrm{unit}^{\otimes n}"]
	      		&[15] {(\text{Free}_P V)}^{\otimes n} \rar["f^{\otimes n}"]
	      		& {(\text{Free}_P V)}^{\otimes n}
	      	\end{tikzcd}
	      \]
	      sends $v_1 \otimes \dots \otimes v_n$ to
	      $a_1 \otimes \dots \otimes a_n$. This implies that the map
	      induced by restriction along $V \to \forget P(\text{Free}_P V)$
	      	\begin{multline*}
	      	\quad\qquad
	      	\autooperad{\forget P \circ \mathrm{j}_V}(n)
	      	\isonat
	      		\displaystyle\coint{{(\text{Free}_P V)}^{\text{full}}}
	      		\hom{\fieldk}{{(\forget P -)}^{\otimes n}}{\forget P -}
	      		\\
	      	\longrightarrow
	      		\hom{\fieldk}{V^{\otimes n}}{\forget P (\text{Free}_P(V))}
	      	\end{multline*}
	      is a monomorphism.
	      By the universal property of the cointegral, this monomorphism
	      factors through a map 
	      \begin{align*}
	      \autooperad{\forget P \circ \mathrm{j}_V}(n)
	      &\longrightarrow
	      \displaystyle\coint{V^{\text{full}}}
	      		\hom{\fieldk}{{(-)}^{\otimes n}}{P \triangletimes -}
	      	\\&\qquad\eqqcolon
	      	\Hom_{\gl V}(V^{\otimes n},P\triangletimes V) 
	      \end{align*}
	      which is also necessarily a monomorphism. 
	      But this last codomain is isomorphic to 
	      $P(n)$~[\ref{lemma: gln calculation for reconstruction}].
\end{enumerate}
By inspection, the composition is the identity map of $P(n)$, so the
first map in the composition $P(n)\to \autooperad {\forget P}(n)$ is
also an isomorphism.
\end{proof}

As a corollary, one gets the following proposition.

\opoftangent*

\begin{proof}
Let $\simplyconnectedLiegroups$ denote the full subcategory of
simply connected Lie groups. It is a coreflective subcategory of
$\Lie\Grp$
\[
	\begin{tikzcd}[ampersand replacement=\&]
		\simplyconnectedLiegroups \arrow[rr, hook', shift left=2,""]
		\&\&
		\Lie\Grp \arrow[ll, shift left=2, "G \mapsto G^\circ"]
	\end{tikzcd}
\]
where the coreflector $G \mapsto G^\circ$ sends a Lie group to
the universal covering of the connected
component of its unit. Since the counit $G^\circ \to G$ is sent to
an isomorphim by $\tangentate$:
\[
	\tangentate G^\circ = \tangentate G,
\]
the endormorphism operads of the functors
$\tangentate : \Lie\Grp \to \vect\reals$ and $\tangentate :
\simplyconnectedLiegroups \to \vect\reals$ are canonically isomorphic.

Now, $\tangentate$ factors as
\[
	\begin{tikzcd}
		\simplyconnectedLiegroups
		\ar{rr}{=}
		\ar{dr}[swap]{\tangentate}
		&&
		\alg \Lie
		\ar{dl}{\forget\Lie}
		\\
		&
		{\vect\reals}.
	\end{tikzcd}
\]
Then the reconstruction theorem above
shows that $\autooperad {\forget\Lie}$ is $\Lie$, hence
$\autooperad \tangentate$ is also the Lie operad.
\end{proof}

\section{Computations}%
\label{section: computations}

In this section we tie up remaining loose ends, giving the details of
deferred computations. In each case we shall use the same stategy:
given a right adjoint $F : \mathcal D \to \ground$,
\begin{enumerate}
	\item select a separator $S \subset \ground$;
	\item use the operadic separator lemma%
~[\ref{proposition: separators, operadic case}] to get monomorphisms
	      \[
	      	\begin{tikzcd}
	      		\autooperad F (n) \ar[rr, hook] && \autooperad {F_{S^n}}
	      		(n);
	      	\end{tikzcd}
	      \]
	\item identify the biggest subobject of
	      $\autooperad {F_{S^n}} (n)$ that acts naturally on $F$
	      compatibly with the action on $F_{S^n}$; this is
	      $\autooperad F(n) $.
\end{enumerate}

The separators that we shall use are the typical ones: the singleton
for sets, $R$ in the case of modules over a ring $R$ and the set
$\under{\{R[n]\}}{n\in \integers}$ in the case of graded $R$-modules.

\begin{lemma}%
\label{lemma: group to set}
Let
\[
	\forget \Grp : \Grp \longrightarrow \Sets
\]
be the forgetful functor from groups to sets.
Then $\autooperad {\forget \Grp}(n)$ is naturally isomorphic to the 
set underling a free group on $n$ generators, with action as 
specified below.
\end{lemma}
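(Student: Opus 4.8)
The plan is to follow the three-step strategy of this section with ground category $\ground = \Sets$ and separator $S = \{\ast\}$ the singleton. The forgetful functor $\forget\Grp$ is right adjoint to the free group functor $L$, and $L(\ast \amalg \cdots \amalg \ast) = L(\underline{n}) = \freegroup n$, the free group on $n$ generators. Thus $(\forget\Grp)_{S^n}$ is the restriction of $\forget\Grp$ to the one-object full subcategory of $\Grp$ on $\freegroup n$, and the operadic separator lemma~[\ref{proposition: separators, operadic case}] supplies a monomorphism
\[
	\autooperad{\forget\Grp}(n) \hookrightarrow \autooperad{(\forget\Grp)_{S^n}}(n).
\]
It then remains to compute the target and to show that this monomorphism is surjective.

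For the target, I would compute the cointegral over the one-object category on $\freegroup n$ directly: an element of $\autooperad{(\forget\Grp)_{S^n}}(n)$ is a function $\phi : (\freegroup n)^{\times n} \to \freegroup n$ equivariant for the monoid $\End_\Grp(\freegroup n)$ of group endomorphisms. Writing $x_1, \dots, x_n$ for the generators, such a $\phi$ is determined by the single element $w \coloneqq \phi(x_1, \dots, x_n)$, because any tuple $(g_1, \dots, g_n)$ is the image of $(x_1, \dots, x_n)$ under an endomorphism, forcing $\phi(g_1, \dots, g_n) = w(g_1, \dots, g_n)$ by \emph{word substitution}; conversely every $w$ defines such a $\phi$. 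Hence $\autooperad{(\forget\Grp)_{S^n}}(n) \cong \freegroup n$ as a set. Equivalently---and this is the conceptual heart---both functors in sight are corepresentable: $\forget\Grp \cong \hom\Grp{\freegroup 1}{-}$ and, since coproducts of groups are free products, $(\forget\Grp)^{\times n} \cong \hom\Grp{\freegroup n}{-}$; the Yoneda lemma then identifies the natural transformations with $\hom\Grp{\freegroup 1}{\freegroup n} = \freegroup n$.

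For surjectivity of the separator monomorphism, I would observe that each $w \in \freegroup n$ is the restriction of a natural transformation defined on all of $\Grp$: for an arbitrary group $G$ and tuple $(g_1, \dots, g_n) \in G^{\times n}$, set $\eta_G(g_1, \dots, g_n) \coloneqq w(g_1, \dots, g_n)$, the result of substituting the $g_i$ into the word $w$. Naturality is automatic, since every group homomorphism commutes with word substitution. Thus the monomorphism above is a bijection and $\autooperad{\forget\Grp}(n) \cong \freegroup n$.

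Finally I would identify the operad structure by transporting it across this bijection. Under the identification, the operadic unit is the generator $x_1 \in \freegroup 1$, the $\symgroup n$-action is the permutation of generators, and the composition $w_1 \circ_i w_2$ is word substitution of $w_2$ into the $i$-th generator of $w_1$, exactly as stated in~[\ref{example: group to set}]; each of these is read off by evaluating the corresponding structure map on generators. I do not expect a genuine obstacle: once corepresentability is recognized, the content is formal Yoneda bookkeeping. The only points demanding care are matching the operadic structure maps with word substitution and confirming---via step three---that passing to the separator subcategory loses no natural operations, which here is immediate because every word can be evaluated in every group.
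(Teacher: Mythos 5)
Your proposal is correct and follows essentially the same route as the paper: restrict via the operadic separator lemma to the full subcategory on the free group $\freegroup n$, observe that any equivariant $\phi$ is determined by the word $w=\phi(x_1,\dots,x_n)$ using the endomorphism sending $x_i\mapsto g_i$, and then check that every word acts naturally on all groups by substitution. Your aside that $\forget\Grp^{\times n}$ is corepresented by $\freegroup n$, so the whole computation is an instance of the Yoneda lemma, is a nice conceptual shortcut the paper does not spell out, but the substance of the argument is identical.
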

\begin{proof}
Let $\freegroup X$ be the free group on $X = \{x_1,\ldots,x_n\}$.
We shall show that
\[
	\coint{{\freegroup X}^{\text{full}}}\hom {\Sets}
	{\forget \Grp -^{n}} {\forget \Grp -}
\]
is naturally isomorphic to $\freegroup X$.

Let $\phi$ be a map from $\power{\freegroup X} n$ to $\freegroup X$
commuting with every group homomorphism of $\freegroup X$,
let $g_1,\ldots, g_n$ be a tuple in $\power{\freegroup X} n$,
and let $f$ be the group homomorphism taking $x_i$ to $g_i$.
Then 
\begin{align*}
	\phi(g_1,\ldots, g_n)&=\phi(f(x_1),\ldots f(x_n))
	\\
	&= f(\phi(x_1,\ldots,x_n))
\end{align*}
so $\phi$ is determined by its value on $(x_1,\ldots,x_n)$, which is an
element of $\freegroup X$.

These are all distinct because they take different values on the tuple
$(x_1,\ldots, x_n)$, so it remains only to argue that all of these in
fact yield natural maps for arbitrary groups.

For any group $G$, an element in $\freegroup X$ yields a map $G^{\times
n}\to G$ which takes $(g_1,\ldots, g_n)$ to the word in $G$ obtained
from $w$ by replacing $x_i$ with $g_i$; this is clearly natural.
\end{proof}

For computations of endomorphisms of identity functors, recall that the
identity functor is isomorphic to the forgetful functor from algebras
over the trivial operad.

\begin{proposition}%
The central operad of the category $\catofmod R$ of modules over
a commutative ring $R$ is the trivial operad (i.e., $R$ in arity one).
\end{proposition}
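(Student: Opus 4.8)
The plan is to realize the central operad as the endomorphism operad of the identity functor $\id{}$ of $\catofmod R$, which is exactly the forgetful functor $\forget{\trivialoperad}$ for the trivial operad, and then to run the three-step strategy of this section with the separator $S=\{R\}$. Since $\hom R R -$ is the underlying-set functor, $S$ is separating, and $\id{}$ is its own left adjoint, so the operadic separator lemma~[\ref{proposition: separators, operadic case}] applies and yields a monomorphism $\autooperad{\id{}}(n)\hookrightarrow \autooperad{(\id{})_{S^n}}(n)$ for every $n$. Here $(\id{})_{S^n}$ is the identity functor restricted to the full subcategory of $\catofmod R$ on the single object $R^{\oplus n}=L(R\amalg\cdots\amalg R)$, so that $\autooperad{(\id{})_{S^n}}(n)$ is the module of $\End_R(R^{\oplus n})$-equivariant $R$-linear maps $(R^{\oplus n})^{\otimes n}\to R^{\oplus n}$, where equivariance means $\phi(g^{\otimes n}x)=g\,\phi(x)$ for all $g\in\End_R(R^{\oplus n})$.

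For $n\geq 2$ I would show this target is already zero, which forces $\autooperad{\id{}}(n)=0$. Write $e_1,\dots,e_n$ for the standard basis of $R^{\oplus n}$ and let $\phi$ be such an equivariant map. Taking $g=g_j$, the $j$-th coordinate projection ($g_j e_j=e_j$ and $g_j e_i=0$ for $i\neq j$), the tensor $g_j^{\otimes n}(e_1\otimes\cdots\otimes e_n)$ contains a zero factor because $n\geq 2$, hence vanishes; equivariance then gives $g_j\,\phi(e_1\otimes\cdots\otimes e_n)=0$, i.e. the $j$-th coordinate of $\phi(e_1\otimes\cdots\otimes e_n)$ is zero. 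Ranging over $j$ yields $\phi(e_1\otimes\cdots\otimes e_n)=0$. Every basis tensor $e_{i_1}\otimes\cdots\otimes e_{i_n}$ equals $f^{\otimes n}(e_1\otimes\cdots\otimes e_n)$ for the endomorphism $f$ with $f e_k=e_{i_k}$, so equivariance gives $\phi(e_{i_1}\otimes\cdots\otimes e_{i_n})=f\,\phi(e_1\otimes\cdots\otimes e_n)=0$; as these tensors span $(R^{\oplus n})^{\otimes n}$, we conclude $\phi=0$.

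For $n=1$ the computation is the classical identification of the center of $\catofmod R$: naturality of an $R$-linear $\phi_M:M\to M$ along the maps $R\to M$, $1\mapsto m$, forces $\phi_M(m)=\phi_R(1)\,m$, so $\phi$ is multiplication by the scalar $\phi_R(1)\in R$; conversely every scalar acts $R$-linearly and naturally since $R$ is commutative, giving $\autooperad{\id{}}(1)=R$. (If the convention records arity zero, naturality of a transformation from the constant functor at $R$ to $\id{}$ along the unique map out of the zero module forces it to vanish.) Assembling the arities identifies $\autooperad{\id{}}$ with $R$ concentrated in arity one acting by the module action, i.e. with $\trivialoperad$.

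The one point deserving care---and the reason the identity functor behaves more simply than, say, the forgetful functor from commutative algebras over a finite field---is that equivariance in $\autooperad{(\id{})_{S^n}}(n)$ is with respect to all of $\End_R(R^{\oplus n})$, including the non-invertible coordinate projections $g_j$. It is precisely these collapsing endomorphisms, unavailable if one had only the $\mathrm{GL}$-action, that annihilate the generic tensor $e_1\otimes\cdots\otimes e_n$ and thereby kill every higher-arity operation outright; over a finite field the $\mathrm{GL}$-equivariant maps alone would not vanish, which is the very phenomenon responsible for the power operation $P_q$ in the commutative-algebra case. I therefore expect the only genuine checking to be that the projections really belong to the full subcategory produced by the separator (they do, being $R$-linear endomorphisms of $R^{\oplus n}$) and that the spanning step correctly covers repeated indices.
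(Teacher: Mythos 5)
Your proposal is correct and follows essentially the same route as the paper: reduce via the separator $\{R\}$ to $\End_R(R^{\oplus n})$-equivariant maps $(R^{\oplus n})^{\otimes n}\to R^{\oplus n}$, use the coordinate idempotents to annihilate the generic basis tensor when $n\ge 2$ (the paper uses the complementary projections killing a single $e_i$ and intersects their kernels, which is the same mechanism), and identify arity one with $R$. Your extra step extending from $e_1\otimes\cdots\otimes e_n$ to all basis tensors via $f^{\otimes n}$ is a detail the paper leaves implicit.
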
%

\begin{proof}
We shall compute the cointegrals
\[
	\coint{{(R^n)}^{\text{full}}}\hom R {-^{\otimes n}} -
\]
i.e., the linear maps $\phi$ from $\power{(R^n)}{\otimes n}$ to $R^n$
commuting with every map $R^n\to R^n$.

\begin{itemize}
	\item For $n=0$, we have $R^n=R^0=0$,
	      and there is only one map $R\to 0$;
	\item For $n=1$, the center of $\gl 1(R)$ is $R$ itself;
	\item For $n>1$, commuting with the map $\pi_i:R^n\to R^n$ killing the
	      basis element $e_i$ and acting as the identity on the other
	      variables means that any such $\phi$ must take
	      $e_1\otimes\cdots\otimes e_n$ to the kernel of $\pi_i$.  For
	      distinct $i$ and $j$, the kernel of $\pi_i$ and the kernel of
	      $\pi_j$ have null intersection so any such transformation must
	      take the generic primitive tensor to zero.
\end{itemize}

Thus this is the trivial operad $R$, which clearly acts naturally by
scalar multiplication.
\end{proof}

\begin{proposition}%
\label{lemma:identity on graded R-modules}
The central operad of the category of $\integers$-graded $R$-modules
is $R^\integers$, concentrated in arity one
(it acts by scalar multiplication separately in each degree).
\end{proposition}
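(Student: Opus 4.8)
The plan is to follow the three-step strategy of this section. The identity functor of $\integers$-graded $R$-modules is its own left adjoint and coincides with the forgetful functor for algebras over the trivial operad $\trivialoperad$, so I would apply the operadic separator lemma~[\ref{proposition: separators, operadic case}] with the separating set $S = \under{\{R[m]\}}{m \in \integers}$, where $R[m]$ denotes $R$ placed in degree $m$. This set is separating because $\hom{}{R[m]}{M} = M_m$, so a graded map vanishes exactly when all of its degree components do. For each $n$ the lemma yields a monomorphism from $\autooperad{\id{}}(n)$ into the object of natural operations defined only over the free graded modules $R[m_1] \oplus \cdots \oplus R[m_n]$ with $m_i \in \integers$, and the remaining task is to single out those operations that extend naturally to all of $\integers$-graded $R$-modules. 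I would organize the computation by arity.

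In arity one, a natural endomorphism of the identity functor restricts on each $R[m]$ to multiplication by some $r_m \in R$. Because there are no nonzero graded maps between $R[m]$ and $R[m']$ when $m \ne m'$, these coordinates satisfy no relations, so the restriction to $S$ is exactly $\prod_{m} \hom{}{R[m]}{R[m]} = R^\integers$. Every such tuple extends: given $(r_m)_{m \in \integers}$, let the transformation act on the degree-$m$ part of any graded module by multiplication by $r_m$. This is $R$-linear and degree-preserving, and naturality is immediate since each homogeneous element of degree $m$ is the image of the generator of $R[m]$ under a unique graded map. Hence $\autooperad{\id{}}(1) = R^\integers$, acting by scalar multiplication separately in each degree.

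For $n \ge 2$ I would show the component vanishes, exactly as in the ungraded case. Let $\phi$ be a natural operation and let $a_1, \ldots, a_n$ be homogeneous elements of a graded module $A$. Setting $M = R[|a_1|] \oplus \cdots \oplus R[|a_n|]$ with standard generators $e_1, \ldots, e_n$ and $f : M \to A$ the graded map $e_i \mapsto a_i$, naturality gives
\[
	\phi_A(a_1 \otimes \cdots \otimes a_n) = f\bigl(\phi_M(e_1 \otimes \cdots \otimes e_n)\bigr),
\]
so it suffices to kill the generic tensor $e_1 \otimes \cdots \otimes e_n$ for every degree tuple. For each $i$ the projection $\pi_i : M \to M$ that kills $e_i$ and fixes the other generators is a graded endomorphism with $\pi_i^{\otimes n}(e_1 \otimes \cdots \otimes e_n) = 0$, so naturality forces $\phi_M(e_1 \otimes \cdots \otimes e_n) \in \ker \pi_i = R e_i$; intersecting over all $i$ gives $\phi_M(e_1 \otimes \cdots \otimes e_n) = 0$ since $n \ge 2$. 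As the homogeneous primitive tensors span $A^{\otimes n}$ over $R$ and $\phi_A$ is $R$-linear, $\phi_A = 0$. The arity-zero component assigns to each module a natural degree-zero element, and applying naturality to the zero endomorphism forces it to vanish as well. The step demanding the most care---and the main obstacle---is the naturality reduction expressing an arbitrary value $\phi_A(a_1 \otimes \cdots \otimes a_n)$ as a pullback of the generic tensor on a free object of the separator subcategory; once this is secured, the vanishing $\bigcap_i R e_i = 0$ finishes the argument and identifies the central operad with $R^\integers$ concentrated in arity one.
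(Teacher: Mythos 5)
Your proposal is correct and follows essentially the same route as the paper: restrict along the separating set $\{R[m]\}_{m\in\integers}$, observe that the absence of graded maps between distinct shifts makes the arity-one cointegral split as the product $R^\integers$, and kill the higher arities by the same projection argument (intersecting the kernels of the $\pi_i$) used for ungraded $R$-modules. The only difference is that you spell out the naturality reduction and the arity-zero case explicitly, where the paper simply cites that the ungraded computation goes through degreewise.
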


\begin{proof}
We shall compute a cointegral over modules of the form 
\[
	R^{\vec{\jmath}}=R[j_1]\oplus \cdots \oplus R[j_n].
\]
Then we need to determine the maps
$\power{(R^{\vec{\jmath}})}{\otimes n} \to R^{\vec{\jmath}}$ which
commute with every module map from $R^{\vec{\jmath}}$
to itself. The computation in the ungraded case goes through as
before for each $\vec{\jmath}$. In particular, this shows that
$\autooperad {\id {}}$ is concentrated in arity one. For the calculation
in arity one, the full subcategory spanned by the modules $R[j]$ has no
maps between $R[j_1]$ and $R[j_2]$ for distinct $j_1$ and $j_2$. Thus
\[
	\coint{{\{{R[j]}_{j\in \integers}\}}^{\text{full}}}[-,-]
	\isonat
	\prod_{j\in \integers}\int_{{R[j]}^{\text{full}}}^*[-,-]
	\isonat
	\prod_{j\in \integers}R.
\]
On the other hand, we can realize such a product $\under{(r_j)}{j\in
\integers}$ as the natural transformation which multiplies degree $j$ by
$r_j$.
\end{proof}

The situation is more interesting in sets. Let us recall that
$\Perm$ is the operad whose algebras are sets endowed with
a binary operation $(x,y) \mapsto xy$ satisfying
$(xy)z = x(yz) = x(zy)$.

\begin{proposition}%
\label{lemma:identity on sets}
The central operad of the category of sets is the $\Perm$ operad.
\end{proposition}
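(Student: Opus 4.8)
The plan is to follow the three-step recipe of \sectionref{section: computations}, applied to the identity functor $\id{}\colon\Sets\to\Sets$, which is its own right adjoint and is the forgetful functor for algebras over the trivial operad, so that its endomorphism operad is the central operad. Throughout I take the cartesian closed structure on $\Sets$, with unit the singleton $\monu=\ast$ and $\otimes=\times$. For the separating set I would use $S=\{\ast\}$. Since the left adjoint of $\id{}$ is $\id{}$, the object $L(\ast\amalg\cdots\amalg\ast)$ ($n$ copies) is the finite set $[n]=\{1,\dots,n\}$, so $\id{}_{S^n}$ is the restriction of $\id{}$ to the full subcategory of $\Sets$ on the single object $[n]$. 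By the operadic separator lemma~[\ref{proposition: separators, operadic case}] the restriction
\[
	\autooperad{\id{}}(n)\hookrightarrow\autooperad{\id{}_{S^n}}(n)
\]
is a monomorphism, and the target is the set of functions $\phi\colon[n]^{n}\to[n]$ commuting with every self-map of $[n]$, i.e.\ $g\circ\phi=\phi\circ g^{\times n}$ for all $g\in\hom{\Sets}{[n]}{[n]}$.

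Next I would compute this target by a Yoneda-style argument, exactly as in the group computation~[\ref{lemma: group to set}]. Evaluating the commutation condition on the tautological tuple $(1,2,\dots,n)\in[n]^{n}$: given any tuple $(a_1,\dots,a_n)$, let $g\colon[n]\to[n]$ be $g(i)=a_i$ and set $c=\phi(1,\dots,n)$; then $\phi(a_1,\dots,a_n)=\phi(g(1),\dots,g(n))=g(\phi(1,\dots,n))=a_c$. Hence $\phi$ is the projection $\pi_c$ onto the $c$-th coordinate, so $\autooperad{\id{}_{S^n}}(n)=\{\pi_1,\dots,\pi_n\}$, a set with $n$ elements. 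Because every projection is already a natural transformation from $(-)^{\times n}$ to $(-)$ on all of $\Sets$, the monomorphism above is a bijection; this is precisely the third step of the recipe, where the largest subobject acting naturally turns out to be everything. Thus $\autooperad{\id{}}(n)\cong[n]$ for $n\geq1$, while $\autooperad{\id{}}(0)=\emptyset$ since there is no natural map $\ast\to X$ (take $X=\emptyset$).

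Finally I would identify the operad structure and match it with $\Perm$. Unwinding the definitions, the $\symgroup n$-action permutes coordinates and hence permutes the pointers $\{1,\dots,n\}$ in the standard way, and operadic composition is by word substitution: $\gamma(\pi_c;\pi_{d_1},\dots,\pi_{d_m})$ is the projection onto the coordinate in block $c$ selected by $d_c$. This is exactly the data of the $\Sets$-level $\Perm$ operad. As a consistency check, every set $X$ carries the natural left-projection product $xy:=x$, for which $(xy)z=x(yz)=x(zy)=x$, so the defining $\Perm$ relations hold; via the adjunction $\algfunctor\dashv\autooperadfunctor$ this produces a comparison map $\Perm\to\autooperad{\id{}}$ sending the generating product to $\pi_1$.

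The main obstacle is promoting this comparison from a surjection of generators to an arity-wise isomorphism. I would argue surjectivity by showing $\autooperad{\id{}}$ is generated by $\pi_1$: each pointer $\pi_c$ is built from the two binary projections by a planar binary tree that drops $x_1,\dots,x_{c-1}$ through right projections, retains $x_c$, and discards $x_{c+1},\dots,x_n$ through a left projection (the right projection being $\pi_1$ acted on by the transposition). Injectivity then follows from the cardinality match $\lvert\autooperad{\id{}}(n)\rvert=n=\lvert\Perm(n)\rvert$ together with the explicit pointer-composition formula, which shows the map is bijective in each arity and compatible with composition; hence $\autooperad{\id{}}\cong\Perm$.
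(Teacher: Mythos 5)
Your proposal is correct and follows essentially the same route as the paper's proof: restrict along the separator lemma to the full subcategory on $[n]$, use the evaluation-at-$(1,\dots,n)$ argument to show every equivariant map is a projection $\pi_c$, observe that projections are natural on all of $\Sets$, and identify the resulting operad of pointers with $\Perm$. The only difference is that you spell out the final identification (generation by $\pi_1$ plus the cardinality count $\lvert\Perm(n)\rvert=n$) where the paper simply exhibits the relations among the $\pi_i$ and asserts they present $\Perm$; both rest on the same standard facts about the $\Perm$ presentation.
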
%

\begin{proof}
We shall compute
\[
	\coint{{[n]}^{\text{full}}}\hom {\Sets} {-^{n}} -
\]
i.e., the maps $\phi$ from $\power{[n]}{n}$ to $[n]$ commuting with
every endomorphism of $[n]$, where $[n]$ denotes the set $\{1, \dots,
n\}$. For $n=0$ this set is empty. For $n$ strictly larger than zero,
let $\phi$ be such a function and suppose $\phi(1,\ldots, n)=i$. For any
tuple $(a_1,\ldots, a_n)$ be a tuple in $\power{[n]}{n}$ let $f$ be the
function $[n]\to [n]$ which takes $j$ to $a_j$. Then
\begin{align*}
	\phi(a_1,\ldots,a_n)&=\phi(f(1),\ldots, f(n))
	\\&=f(\phi(1,\ldots, n))
	\\&=f(i)
\end{align*}
which shows that $\phi$ is the projection in the $i$th coordinate.

Thus this $n$-th cointegral is the set
$\{\pi_1,\ldots,\pi_n\}$, with symmetric group
action on the subscript.
There are evident relations between the projections for $n=2$:
\begin{align*}
	\pi_1 \circ_1 \pi_1 = \pi_1\circ_2\pi_1 = \pi_1 \circ_2 \pi_2 
\end{align*}
as all of these are the projection $\pi_1$ in arity $3$.
This is a presentation for the $\Perm$ operad which acts naturally
on sets in the obvious way.
\end{proof}

Finally we conduct our computation of the endomorphism operad of the
forgetful functor from commutative algebras to vector spaces over a
finite field.

\comminpositivecharacteristic*
\begin{proof}
Let us consider the cointegral
\[
	\coint{\sym{V_n}^\mathrm{full}} \hom {\ffield q} {-^{\otimes n}} -
\]
where $\sym{-}$ denotes the symmetric algebra 
and $V_n$ is an $n$-dimensional vector space with a choice of basis.

Let $(e_1, \dots, e_n)$ be a basis of $V_n$ and let $(f_1,\ldots,
f_n)$ be an ordered set of polynomials in the variables
$\mathrm{X}_1,\ldots,\mathrm{X}_n$. There is an endomorphism of
$\sym{V_n}$ which takes $e_i$ to $f_i(e_1,\ldots, e_n)$. Equalizing
over this algebra map implies that any function in the cointegral is
fully determined by its value $f$ on $(e_1,\dots, e_n)$, so that there
is a monomorphism
\[
	\begin{tikzcd}
		\displaystyle
		\coint{\sym{V_n}^\mathrm{full}} \hom {\ffield q} {-^{\otimes n}} -
		\ar[rr, hook] && \sym{V_n}.
	\end{tikzcd}
\]

We are now left to find the biggest subobject of $\sym{V_n}$ that acts
naturally on $\forget\com$.

Thus consider an element of $\sym{V_n}$, i.e., a polynomial of the form
\[
	f=\sum_{m_1,\ldots, m_n} \alpha_{\vec{m}}\mathrm{X}_1^{m_1}\cdots
	\mathrm{X}_n^{m_n}.
\]
Such a polynomial acts set-theoretically on commutative algebras $A$ 
via evaluation
\[
	\mathrm{Ev}_f (A): A^{\times n} \longrightarrow A.
\]
This is already natural as a map of \emph{sets}.
But we must restrict to those $f$ for which $\mathrm{Ev}_f$ is
a $\ffield q$-multilinear, i.e., to induce a map
\[
A^{\otimes n}\longrightarrow A.
\]

For this, let us see what multilinearity means in the case where
$A=\sym{V_{n+1}}$ is the free commutative algebra on $n+1$
generators $e_1, \dots, e_{n+1}$: consider the equation in
$\sym{V_{n+1}}^{\times n}$
\begin{multline*}
	\label{eq: sum in sym}
	(e_1\dots e_n) + 
	(e_1\dots e_{i-1}, e_{n+1},
	e_{i+1}\dots e_n)
	\\
	=
	(e_1\dots e_{i-1},(e_i+e_{n+1}),
	e_{i+1}\dots e_n).
\end{multline*}
Via direct computation, a first condition for the function
$\mathrm{Ev}_f(\sym{V_{n+1}})$ to be linear with respect to the above
equation is that each of the monomials of $f$ act linearly in its
$i$-th variable.

Then for the linearity of each monomial, the above equation yields
also
\[
	\power{(e_i+e_{n+1})}{m_i}=e_i^{m_i}+e_{n+1}^{m_i}.
\]
This is only possible if $m_i$ is a power of $p$, the characteristic of
$\ffield q$. This already implies, e.g., that $m_i\ne 0$.

For multilinearity we also need $\power{(\alpha e_i)}{m_i}$ to equal
$\alpha e_i^{m_i}$ for arbitrary $\alpha\in \ffield q$. This implies
that $q-1$ must divide $m_i-1$. This fact then further implies that
$m_i$ is not only a power of $p$ but also a power of $q$.

So far, we have argued that the $n$-ary operations in
$\autooperad{\forget\com}$ inject into the $\ffield q$ polynomials in
$\mathrm{X}_1,\ldots, \mathrm{X}_n$ such that the exponent of each
$\mathrm{X}_i$ is a power of $q$. Conversely any such polynomial
clearly acts linearly. 
This presentation corresponds to the presentation in the hypotheses of
the proposition as follows. The binary product $\mu$ corresponds to the
two-variable monomial $\mathrm{X}_1\mathrm{X}_2$ and the power operation
$P_q$ corresponds to the one-variable monomial $\mathrm{X}_1^q$.
Operadic composition is via substitution of variables.
\end{proof}

\section*{Acknowledgments}

The authors would like to thank Rune Haugseng, Theo Johnson-Freyd,
Claudia Scheimbauer, and John Terilla for useful discussions, Greg
Arone, Michael Batanin and Birgit Richter for pointing us to pertinent
references in the literature, and Benoit Fresse for a remark that
partially inspired this paper.

\bibliography{bib/dl}
\bibliographystyle{sty/dl-en}
\end{document}